\newtheorem{theorem}{Theorem}[section]
\newtheorem{corollary}[theorem]{Corollary}
\newtheorem{proposition}[theorem]{Proposition}
\newtheorem{definition}[theorem]{Definition}
\newtheorem{lemma}[theorem]{Lemma}
\newtheorem{claim}[theorem]{Claim}
\newtheorem*{theorem*}{Theorem}
\newtheorem*{proposition*}{Proposition}
\newtheorem*{definition*}{Definition}
\newtheorem*{lemma*}{Lemma}
\newtheorem*{claim*}{Claim}
\newtheorem*{corollary*}{Corollary}
\newtheorem*{convention*}{Convention}
\newtheorem{thmintro}{Theorem}
\theoremstyle{definition}
\newtheorem*{question*}{Question}
\newtheorem*{notation}{Notation}
\newtheorem{construction}{Construction}
\theoremstyle{remark}
\newtheorem{rem}[theorem]{Remark}
\newtheorem*{rem*}{Remark}
\newtheorem*{acknowledgement}{Acknowledgement}
\newtheorem{case}{Case}
\newcommand{\Z}{\mathbb{Z}}
\newcommand{\R}{\mathbb{R}}
\newcommand{\T}{\mathbb{T}}
\newcommand{\tor}{\mathrm{Tor}}
\newcommand{\cP}{\mathcal{P}}
\newcommand{\cF}{\mathcal{F}}
\newcommand{\PP}{\mathcal{P}}
\newcommand{\acts}{\curvearrowright}
\newcommand{\flow}{\varphi}
\newcommand{\orbitspace}{(P_\varphi, \widetilde{\cF}^s, \widetilde{\cF}^u)}
\newcommand{\overbar}[1]{\mkern 1.5mu\overline{\mkern-1.5mu#1\mkern-1.5mu}\mkern 1.5mu}
\newcounter{notes}
\title{Transitive Anosov flows on non-compact manifolds}
\author[T.~Barthelm\'e]{Thomas Barthelm\'e}
\address{Queen's University, Kingston, Ontario}
\email{thomas.barthelme@queensu.ca}
\urladdr{sites.google.com/site/thomasbarthelme}
\author[L.~Lu]{Lingfeng Lu}
\address{Queen's University, Kingston, Ontario}
\email{21ll24@queensu.ca}
\subjclass[2020]{Primary 37D20, 37D05;
  Secondary 37C27}
\begin{document}

\begin{abstract}
    In this article we study topological transitivity of Anosov flows on non-compact 3-manifolds. We provide both homological and homotopical conditions under which lifts to infinite covers of transitive Anosov flows stay transitive. In particular, we show that most transitive Anosov flows admit a transitive lift to an infinite cover, and we build a family of examples of transitive Anosov flows on non-compact manifolds that satisfy a homotopical characterization of suspension flows. 
\end{abstract}

\maketitle

\section{Introduction}

The topological study of Anosov flows, or Anosov diffeomorphisms, has been so far almost exclusively limited to the case where the ambient manifold is compact. Some exceptions to that rule can be found in the works on Anosov diffeomorphisms on the plane or open surfaces, e.g., \cite{Groisman_Nitecki_Anosov_diffeo_plane,Matsumoto21,Mendes77,ovadia2024anosovdiffeomorphismsopensurfaces}, as well as recently, in the introduction and study of \emph{Anosov-like} actions \cite{BFM2022orbit,BBM24}, where compactness is not explicitly needed.

The dynamical side of the study of Anosov flows has seen many more works where the ambient manifold is not compact, for instance in the very natural context of studying the properties of geodesic flows on non-compact, negatively curved manifolds. Another instance where the compactness assumption is explicitly dropped is, for instance, in the work of Sharp \cite{Sharp_1993} and Dougall--Sharp \cite{DS21}, where they prove counting results on (infinite) abelian covers of Anosov flows.

One of the issues appearing when considering Anosov dynamics in the non-compact setting is that the non-wandering set, i.e., the part of the manifold where the interesting dynamics appear, can be small or even trivial. (A classical example is that there exists a complete metric making the translation on $\R^2$ Anosov \cite{White73}.) So for the dynamics to be interesting, one would like to be able to ensure that the non-wandering set is large enough.
In this article, we consider regular infinite covers (see Definition \ref{def: covers}) of compact manifolds, as they are a very large and natural source of examples of Anosov flows in the non-compact setting and address the following general question:
\begin{question*} 
    Let $\flow$ be a transitive Anosov flow on a compact $3$-manifold $M$. Let $\overbar{M}$ be any infinite cover of $M$ and $\overbar{\flow}$ the lifted flow. How common is it for $\overbar{\flow}$ to be transitive?
\end{question*}

This question of existence and/or abundance of transitive Anosov flows in non-compact manifolds, or infinite covers of compact manifolds has appeared more or less explicitly in several different works. First, in the context of geodesic flows for negatively curved metrics, the question of transitivity (both in the compact and non-compact case), was an historically important one that was resolved in the 1930s (see \cite{Hedlund39}) in low dimension and, in greater generality and any dimensions, in the early 1970s \cite{Eberlein72}.
Much more recently, Dougall and Sharp consider in \cite{DS21} certain Abelian covers $Y$ of an Anosov flow $\flow$ on $M$ and \emph{assume} the transitivity of the lifted flow $\flow_Y$, making it natural to wonder whether or not this assumption is necessary\footnote{Contrarily to this article, the dimension of the manifolds they consider in \cite{DS21} is arbitrary.}. In \cite[Question 3.1]{RodriguezHertz2008}, they ask about the existence of transitive Anosov diffeomorphisms on non-compact manifolds. And in \cite{BFM2022orbit,BBM24}, while the axiomatic definition of Anosov-like actions does not require (co-)compactness, compactness of the manifold is used to prove that (pseudo-)Anosov flows gives rise to Anosov-like action, making it natural to ask whether or not compactness is necessary.

In this article, we give answers to (some versions of) each of these questions. Before stating these more precisely, we start with the following result, showing that the existence of transitive Anosov flows on infinite covers is almost ubiquitous:
\begin{thmintro}\label{thmintro: ubiquitous existence}
    Let $M$ be a compact 3-manifold that admits a transitive Anosov flow. If $M$ is not a graph manifold\footnote{A \emph{graph manifold} is a $P^2$-irreducible (irreducible and contains no two-sided $\R \mathrm{P}^2$'s) $3$-manifold such that its JSJ decomposition consists only of Seifert fibered pieces, see e.g., \cite{Hat23}.}, then there exists an infinite cover of $M$ on which the lifted flow is transitive.
\end{thmintro}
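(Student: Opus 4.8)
The plan is to reduce Theorem~A to a homological transitivity criterion for abelian covers --- supplemented, where it does not directly apply, by a homotopical criterion for general regular covers --- together with the geometric decomposition of $3$-manifolds. The homological criterion, proved separately, states: for a surjection $\psi\colon\pi_1(M)\to\Z^k$ with associated regular cover $\overbar{M}_\psi\to M$, the lifted flow is transitive if and only if $0$ lies in the interior of the convex cone in $\R^k$ generated by $\{\psi_*[\gamma] : \gamma\text{ a periodic orbit of }\flow\}$, i.e.\ these classes lie in no closed half-space. We also use the standard fact that the homology classes of periodic orbits of a transitive Anosov flow span $H_1(M;\R)$, so the cone $C$ they generate is full-dimensional; a full-dimensional convex cone contains $0$ in its interior exactly when it equals the ambient space. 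Hence in the \emph{principal case} $C=H_1(M;\R)$ we are done: applying the criterion to the projection $\pi_1(M)\twoheadrightarrow H_1(M;\Z)/\mathrm{tor}\cong\Z^{b_1(M)}$ shows that the lift of $\flow$ to the maximal free abelian cover --- an infinite cover as soon as $b_1(M)\ge1$ --- is transitive. It remains to rule out, or handle, the two ways this can fail.

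\emph{The homologically degenerate case $C\subsetneq H_1(M;\R)$.} If $\flow$ admitted a global cross-section it would be the suspension of a smooth Anosov diffeomorphism of a closed surface, hence (Franks--Newhouse) of a hyperbolic automorphism of $T^2$; then $M$ is a torus bundle over $S^1$ with Anosov monodromy, whose JSJ decomposition cuts along the incompressible fiber torus into the single Seifert piece $T^2\times[0,1]$, making $M$ a graph manifold --- contrary to hypothesis. So $\flow$ has no cross-section, and the plan is to build instead a non-abelian infinite cover of $M$, exploiting a hyperbolic JSJ piece (which $M$ has, since it is not a graph manifold), and to apply the homotopical criterion to it.

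\emph{The case $b_1(M)=0$.} Now $M$ has no nontrivial abelian cover, so we first pass to a finite cover. Since $M$ carries an Anosov flow its universal cover is $\R^3$ and $\pi_1(M)$ acts freely there, so $M$ is aspherical, irreducible, and contains no two-sided $\R\mathrm{P}^2$; being not a graph manifold, its JSJ decomposition contains a hyperbolic piece. By Agol's virtual fibering theorem --- with Przytycki--Wise when $M$ is not itself hyperbolic --- $M$ has a finite regular cover $M'$ with $b_1(M')\ge1$, and since the preimage of a hyperbolic JSJ piece is again hyperbolic, $M'$ is again not a graph manifold. If the lifted flow $\flow'$ on $M'$ is transitive, the cases already treated apply to $(M',\flow')$ and yield an infinite cover of $M'$, hence of $M$, on which the lifted flow is transitive.

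The main obstacle is the proviso ``if $\flow'$ is transitive'' --- and, in the degenerate case, the analogous question for the non-abelian cover: transitivity of an Anosov flow is \emph{not} inherited by an arbitrary cover, so these covers must be chosen with care. The plan is to arrange that the images of the periodic orbit classes of $\flow$ \emph{generate} the relevant deck group $\Gamma$: for finite $\Gamma$ this is exactly the transitivity criterion (a subsemigroup of a finite group is a subgroup), and for general $\Gamma$ it is the substance of the homotopical criterion. The point is that this generation property can be achieved \emph{simultaneously} with $b_1(M')\ge1$ (respectively, with escaping the degenerate cone), and it is here that the hyperbolic JSJ piece furnished by ``$M$ not a graph manifold'' is essential. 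Reconciling Agol's covering construction with this generation property is the technical heart of the argument; granting it together with the two transitivity criteria, the rest is bookkeeping.
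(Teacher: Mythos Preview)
Your outline has the right ingredients --- the $Y$-full/cone criterion for abelian covers and the Agol--Przytycki--Wise input --- but it has a genuine gap in the ``degenerate'' branch, and the paper's route avoids that branch entirely.

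First, a small point: your concern about transitivity of $\flow'$ on a \emph{finite} cover $M'$ is misplaced. Transitivity of a (topological) Anosov flow is equivalent to density of periodic orbits, and periodic orbits lift (after finite iteration) to periodic orbits in any finite cover; so $\flow'$ is automatically transitive. The real issue is only the \emph{infinite} cover.

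The substantive gap is your ``homotopical criterion'' for a non-abelian infinite cover. You never state it precisely, and the version you gesture at --- that the lifted flow is transitive provided the periodic-orbit classes \emph{generate} the deck group --- is not known and is probably false as stated for infinite non-abelian $\Gamma$. The pair-of-pants/closing-lemma trick that upgrades ``semigroup generation'' to ``every class is represented by a single periodic orbit'' works because homology is abelian: the shadowing orbit $\eta$ satisfies $[\eta]=[\gamma]+[\gamma']$ in $H_1$, but there is no analogous statement in a non-abelian deck group. So your degenerate-case plan (build a non-abelian cover from a hyperbolic JSJ piece, then apply the criterion) is not actually an argument, and you acknowledge as much by calling it the ``technical heart'' and then granting it.

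The paper sidesteps the degenerate case completely, and this is the idea you are missing. Since $M$ is not a graph manifold, $\flow$ is not a suspension; hence (Barbot--Fenley's homotopical characterisation) some pair of distinct periodic orbits $\alpha,\alpha'$ are freely homotopic with $\alpha'\simeq\alpha^{-1}$, so \emph{both} $[\alpha]$ and $-[\alpha]$ are represented by periodic orbits. The only obstruction to applying the abelian criterion is that $[\alpha]$ might be torsion in $H_1(M)$. Here the paper uses not virtual fibering but virtual \emph{RFRS} (also from Agol and Przytycki--Wise): an RFRS filtration lets you pass to a finite regular cover $\hat M$ in which (a power of) $\alpha$ becomes non-torsion in $H_1(\hat M)$. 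Now $\pm[\alpha]$ are both represented by periodic orbits of the lifted flow $\hat\flow$ and $[\alpha]\neq 0$ in $\overbar{H_1(\hat M)}_T$, so a direct ``closing lemma + pair of pants'' argument produces, for every $\varepsilon$, an $\varepsilon$-dense periodic orbit whose class lies in a chosen complement $K$ of $\mathrm{span}([\alpha])$. The abelian cover of $\hat M$ with deck group $\overbar{H_1(\hat M)}_T/K$ then carries a transitive lift. No non-abelian transitivity criterion is needed, and the degenerate cone never enters.

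In short: replace your case split ``cone full vs.\ degenerate'' by the single observation that non-suspension flows always have a periodic orbit whose class \emph{and its negative} are both realised, and use RFRS (rather than just virtual $b_1\ge 1$) to make that class non-torsion after a finite cover.
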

Our result is in fact more general than this, as for many cases of Anosov flows on graph manifolds, we can still exhibit infinite covers where the lifted flow is transitive. See Theorems \ref{theorem: rfrs cover}, \ref{theorem: R-covered transitivity} and Corollary \ref{corollary: rfrs manifolds} for more complete statements.

Additionally from the general existence question, we provide both homological and homotopical conditions ensuring transitivity of flows in certain covers, as well as examples of transitive flows on non-compact manifolds satisfying extra properties.

\subsection{Transitivity on abelian covers: A homological condition}
In \cite{Sharp_1993}, Sharp studied \emph{homologically full} Anosov flows that can be defined as transitive Anosov flows $\flow$ on $M$ such that every element of $H_1(M;\Z)$ is represented by a periodic orbit of $\flow$. 
In \cite{DS21}, they extended this definition in the following way: Given $Y\to M$ a regular abelian cover of $M$ with associated covering group $G$, then $G$ can be identified with a quotient of $H_1(M;\Z)$, and they call a transitive Anosov flow on $M$ \emph{$Y$-full} if every class in $G$ is represented by a periodic orbit of $\flow$. It is shown in \cite[Lemma 7.4]{DS21} that if the lift $\flow_Y \colon Y \to Y$ is transitive then $\flow$ is $Y$-full.

Gogolev and Rodriguez Hertz proved, in \cite[Theorem 2.5]{gogolev2020abelianlivshitstheoremsgeometric}, the converse of that statement when $Y$ is the maximal abelian cover. We extend their argument to any abelian cover, thus obtaining the following homological condition for transitivity.

\begin{thmintro}\label{thmintro: Y-full}
Let $\flow$ be a transitive Anosov flow on a compact $3$-manifold $M$ and $Y$ be a regular abelian cover of $M$, then the lifted flow $\flow_Y$ on $Y$ is transitive if and only if $\flow$ is $Y$-full.
\end{thmintro}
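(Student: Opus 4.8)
The forward implication, that transitivity of $\flow_Y$ forces $\flow$ to be $Y$-full, is precisely \cite[Lemma 7.4]{DS21}, so the plan is to prove the converse: assuming $\flow$ is a transitive Anosov flow on the compact $3$-manifold $M$ and is $Y$-full, show $\flow_Y$ is transitive, adapting \cite[Theorem 2.5]{gogolev2020abelianlivshitstheoremsgeometric} — proved there for the maximal free abelian cover — to an arbitrary regular abelian cover. Throughout, write $p\colon Y\to M$ for the covering map, $G$ for its deck group (a finitely generated abelian group, since $M$ is compact), and $q\colon H_1(M;\Z)\twoheadrightarrow G$ for the induced surjection.

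\emph{Reduction to a statement in $M$.} As $Y$ is a connected, second countable manifold, $\flow_Y$ is transitive as soon as every pair of nonempty open sets in $Y$ is joined by an orbit arc of $\flow_Y$; shrinking, we may assume these sets are small flow boxes carried homeomorphically by $p$ onto flow boxes $U$ and $V$ of $\flow$ in $M$, and since flow boxes are simply connected we may fix reference points in $U$ and in $V$ and a fixed auxiliary path in $M$ between them. To each orbit arc $\gamma$ of $\flow$ from $U$ to $V$ we then associate the class $h(\gamma)\in G$ obtained by closing $\gamma$ up into a loop through the reference points and applying $q$ to its homology class; this is well defined. A routine covering-space computation shows that an orbit arc $\gamma$ with $h(\gamma)$ equal to a prescribed element of $G$ lifts to an orbit arc of $\flow_Y$ joining a prescribed lift of $U$ to a prescribed lift of $V$. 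Hence it suffices to prove: for all flow boxes $U,V$ of $\flow$ in $M$, the set $\{\,h(\gamma) : \gamma \text{ an orbit arc of }\flow\text{ from }U\text{ to }V\,\}$ equals $G$.

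\emph{Realizing every class.} Fix a finite symmetric generating set of $G$; by $Y$-fullness there are periodic orbits $\eta_1,\dots,\eta_n$ of $\flow$ with $q[\eta_1],\dots,q[\eta_n]$ equal to this set, so the submonoid of $G$ they generate is all of $G$ and in particular $\{\sum_i k_i\,q[\eta_i] : k_i\ge 1\}=G$. Now fix flow boxes $U,V$ and integers $k_i\ge 1$. A dense orbit of $\flow$ — which exists by transitivity — visits $U$, then each of arbitrarily small neighbourhoods of chosen points $p_1\in\eta_1,\dots,p_n\in\eta_n$ in turn, then $V$, and extracting the corresponding sub-arcs $\alpha_0,\alpha_1,\dots,\alpha_n$ we form the concatenation $\alpha_0*\eta_1^{k_1}*\alpha_1*\cdots*\eta_n^{k_n}*\alpha_n$, which is a $\delta$-pseudo-orbit of $\flow$. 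By the shadowing lemma for Anosov flows it is $\varepsilon$-shadowed by a genuine orbit arc $\gamma$ which, for $\varepsilon$ small enough, starts in $U$, ends in $V$, and is homotopic rel endpoints to that concatenation. Consequently $h(\gamma)=h_0+\sum_i k_i\,q[\eta_i]$ with $h_0\in G$ depending on $U$, $V$, the $\eta_i$ and the chosen arcs but not on the $k_i$; letting the $k_i$ range over all integers $\ge 1$ puts $h_0+G=G$ inside $\{h(\gamma)\}$, which proves the displayed claim and hence the theorem.

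\emph{Expected obstacle.} The delicate point is the last step: one must check that the arcs extracted from a dense orbit can be made to begin and end within $\delta$ of the prescribed points so that the shadowing lemma genuinely applies, and — more importantly — that the shadowing orbit arc is homotopic rel endpoints to the reference concatenation (the standard bookkeeping absorbing the small endpoint discrepancies into the simply connected flow boxes), so that $h(\gamma)$ is exactly the expected sum and not merely close to it. This is where transitivity and shadowing for $\flow$ on the \emph{compact} manifold $M$ are used, and one should note that no dichotomy between the topologically mixing and suspension cases is needed, since only concatenations of honest orbit arcs are ever shadowed; everything downstream, namely the passage from $M$ to the cover $Y$, is then a formal consequence of covering-space theory.
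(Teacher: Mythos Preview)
Your argument is correct and follows a genuinely different route from the paper's. The paper, tracking Gogolev--Rodriguez Hertz more closely, proves transitivity of $\flow_Y$ by manufacturing arbitrarily $\varepsilon$-dense \emph{periodic} orbits in $Y$: it starts from an $\varepsilon$-dense periodic orbit $\gamma$ of $\flow$, uses $Y$-fullness to pick a single periodic orbit $\gamma'$ with $[\gamma]+[\gamma']$ torsion in $G$, applies the Anosov closing lemma to their concatenation, invokes Fried's pair-of-pants construction to identify the homology class of the resulting closed orbit $\eta$, and observes that (a power of) $\eta$ lifts to an $\varepsilon$-dense union of closed orbits in $Y$; transitivity then follows from density of periodic orbits. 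You instead work with orbit \emph{arcs} and prove transitivity straight from the definition: after the covering-space reduction, you splice powers of a full symmetric generating family of periodic orbits into arcs of a single dense orbit and shadow, so that the deck-group class of the shadowing arc ranges over $h_0+G=G$. Your approach trades a single concatenation for more bookkeeping (several $\eta_i$ and the constant $h_0$), but it buys two things: you avoid Fried's immersed-surface argument for the homology computation, replacing it by the elementary observation that an $\varepsilon$-shadowing orbit is homotopic to the shadowed pseudo-orbit; and you bypass the appeal to ``dense periodic orbits $\Rightarrow$ transitive'' on the non-compact manifold $Y$, which in the paper requires a word of justification. The paper's route, on the other hand, yields the stronger intermediate statement that periodic orbits of $\flow_Y$ are dense.
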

More importantly, the argument of Gogolev--Rodriguez Hertz allows us to give an easily checkable condition for building a cover $Y$ for which $\flow$ is $Y$-full, see Theorem \ref{theorem: construct Y-full cover}. 
Theorem \ref{thmintro: ubiquitous existence} is a consequence of this, together with very deep results in $3$-manifolds topology due to Agol \cite{agol_virtual_haken_conjecture} and Przytycki--Wise \cite{wise_mixed_3_manifolds}, that allows one to always build a finite cover of $M$ such that the lifted flow is $Y$-full for some further infinite abelian cover $Y$, provided $M$ is not a graph-manifold.

\subsection{Transitivity on regular covers: the $\R$-covered case}

An Anosov flow $\flow$ on a $3$-manifold $M$ is called $\R$-covered if the leaf space of its stable, or unstable, foliation is homeomorphic to $\R$ (see Section \ref{section: orbit space} for details). This class of Anosov flows classically contains all suspension of Anosov diffeomorphisms on the torus, geodesic flows of negatively curved surfaces, as well as many more examples, for instance all Anosov flows preserving a contact structure.
For this class of Anosov flows, we have the striking result that their lifts to \emph{all} regular covers (not just abelian ones) are either transitive or all orbits are wandering:
\begin{thmintro}\label{thmintro: R-covered}
    Let $\varphi \colon M \to M$ be an $\R$-covered Anosov flow. Let $\overbar{M}$ be a regular cover of $M$ and $\overbar{\varphi}$ be the lift of $\varphi$ to $\overbar{M}$. Then $\overbar{\varphi}$ is transitive if and only if $\overbar{\varphi}$ admits a periodic orbit, and if and only if it has a dense set of periodic orbits.
    
    Moreover, if $\overbar{\varphi}$ is \emph{not} transitive, then we have exactly one of the three following cases:
    \begin{enumerate}
        \item $\overbar{M} = \widetilde{M}$, i.e., it is the universal cover;
         \item $\overbar{M}$ is a fiberwise cover of $T^1\widetilde{\Sigma}$, where $\widetilde{\Sigma} \cong \mathbb{H}^2$ is the universal cover of some hyperbolic orbifold $\Sigma$, and $\varphi$ is a lift of the geodesic flow on $T^1\Sigma$;
        \item $\overbar{M} = \mathbb{T}^2 \times \R$ or $\overbar{M} = \mathbb{S}^1 \times \R \times \R$, and $\varphi$ is the suspension flow of an Anosov diffeomorphism on a torus.
    \end{enumerate}
\end{thmintro}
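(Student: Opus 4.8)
The plan is to leverage the structure theory of $\R$-covered Anosov flows together with Theorem B in the abelian case. First I would establish the equivalences: the implications ``transitive $\Rightarrow$ dense periodic orbits $\Rightarrow$ has a periodic orbit'' are immediate (the first from the Anosov closing lemma applied in $\overbar M$, the second trivial). The crux is ``has a periodic orbit $\Rightarrow$ transitive''. Here I would use the dichotomy for $\R$-covered flows: either $\varphi$ is (orbit equivalent to) a suspension of an Anosov diffeomorphism of $\mathbb T^2$, or it is \emph{skew $\R$-covered}, in which case by Barbot--Fenley the orbit space $\orbitspace$ is a nice skewed lozenge-free object and the flow is, up to finite cover, self-orbit-equivalent with strong contraction properties. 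In the suspension case, $M = \mathbb T^3$ (or the relevant torus bundle) and any regular cover $\overbar M$ corresponds to a subgroup of $\pi_1(M) = \Z^2 \rtimes \Z$; I would show the lift has a periodic orbit iff the cover ``sees'' a periodic orbit, and then invoke Theorem B together with the explicit description of periodic orbits of toral automorphisms to get transitivity, which also pins down $\overbar M \in \{\mathbb T^2 \times \R, \mathbb S^1 \times \R^2\}$ in the non-transitive case (these are exactly the covers killing one or two of the $\Z^2$-directions while keeping the monodromy $\Z$, which carry no periodic orbit).

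For the skew case, I would argue as follows. Fix a periodic orbit $\overbar\gamma$ of $\overbar\varphi$ projecting to a periodic orbit $\gamma$ of $\varphi$ with $[\gamma] = g \in \pi_1(M)$; then $g$ lies in the subgroup $H = \pi_1(\overbar M) \le \pi_1(M)$. The key dynamical input is that for a transitive $\R$-covered Anosov flow, the set of periodic orbits whose free-homotopy class lies in any fixed infinite-index... no — better: I would use that a $\R$-covered Anosov flow is transitive and its periodic orbits ``generate'' $\pi_1$ in a strong sense, and more precisely that the subgroup generated by (powers of) elements represented by periodic orbits contained in $H$ is all of $H$, via a ping-pong / shadowing argument on the orbit space: starting from the axis of $g$ acting on $P_\varphi$, one uses the density of periodic orbits of $\varphi$ and the fact that $\overbar\gamma$ being periodic forces enough returning orbits. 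This is where I would cite or adapt the Gogolev--Rodriguez Hertz style argument (Theorem~\ref{theorem: construct Y-full cover}) at the level of $\pi_1$ rather than $H_1$, using that in the skew case one can promote homological fullness to homotopical control because the flow is, after passing to the canonical double cover, bi-contact and its weak foliations are uniform.

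Next, to obtain the trichotomy in the non-transitive case: suppose $\overbar\varphi$ has no periodic orbit. By the above, $H = \pi_1(\overbar M)$ contains no element represented by a periodic orbit of $\varphi$. Since periodic orbits of an $\R$-covered Anosov flow are dense and, by Fenley's work, their classes are ``$\pi_1$-filling'' except in the geodesic and suspension cases, $H$ must be one of a very restricted list. In the geodesic-flow case, $\widetilde M = T^1\mathbb H^2$ and the covers with no closed orbit are exactly the fiberwise covers $T^1\widetilde\Sigma$ for $\widetilde\Sigma$ a cover of the base orbifold on which the flow has no closed geodesic lifting to a loop, i.e., $\widetilde\Sigma$ simply connected — giving case (2); the only further degeneration is $H$ trivial, i.e., $\overbar M = \widetilde M$, case (1). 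In the suspension case we already identified cases (1) and (3). The main obstacle I anticipate is precisely this step: upgrading ``$\overbar\gamma$ periodic'' to ``$\overbar\varphi$ transitive'' in the skew $\R$-covered case, which requires showing that a \emph{single} periodic orbit in the cover forces the covering subgroup to be generated by classes of periodic orbits of $\varphi$ — this is genuinely stronger than the homological statement of Theorem~B and will need the fine structure of the orbit space and the action of $\pi_1(M)$ on it (non-separated leaves, lozenges, the regulating pseudo-Anosov flow transverse to the skewed foliation) rather than soft arguments.
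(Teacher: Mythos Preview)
Your proposal has a genuine gap in the skew case, which you yourself flag as the main obstacle. The route via ``promoting homological fullness to homotopical control'' and adapting Theorem~\ref{thmintro: Y-full} is not how the paper proceeds, and it is unclear it can be made to work: Theorem~\ref{thmintro: Y-full} and the Gogolev--Rodriguez Hertz argument are intrinsically homological and require the cover to be \emph{abelian}, whereas here the cover is an arbitrary regular cover. (Even your suspension case relies on Theorem~\ref{thmintro: Y-full}, but a normal subgroup of $\Z^2\rtimes\Z$ need not give an abelian quotient.)

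The paper's argument is both simpler and uniform across the trivial and skew cases. The key observation you are missing is this conjugation trick: if $\overbar\varphi$ has a periodic orbit, then some nontrivial $h\in\pi_1(\overbar M)$ fixes a point $x\in\cP_\varphi$. Since $\pi_1(\overbar M)$ is \emph{normal} in $\pi_1(M)$, for every $g\in\pi_1(M)$ the conjugate $ghg^{-1}$ lies in $\pi_1(\overbar M)$ and fixes $g\cdot x$. Transitivity of $\varphi$ makes the $\pi_1(M)$-orbit of the leaf through $x$ dense, so the set of leaves fixed by elements of $\pi_1(\overbar M)$ is dense. This, together with the vacuity of Axiom~\ref{Anosov-like axiom branching} in the $\R$-covered setting, makes $\pi_1(\overbar M)\acts\cP_\varphi$ an Anosov-like action (Lemma~\ref{lemma: regular cover + periodic orbit is almost Anosov-like}). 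The paper then invokes the Smale-chain machinery of \cite{BBM24}: a non-transitive Anosov-like action must contain a Smale chain, but the trivial and skew planes contain no chain of lozenges satisfying the requisite combinatorics (Proposition~\ref{prop_sufficient_transitive}), so the action, and hence $\overbar\varphi$, is transitive. No shadowing, ping-pong, or bi-contact structure is needed.

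For the trichotomy you also miss a clean structural step. If $\overbar\varphi$ has no periodic orbit then $\pi_1(\overbar M)$ acts \emph{freely} on $\cP_\varphi$, hence freely on the leaf space $\Lambda(\widetilde\cF^s)\cong\R$. By H\"older's theorem any group acting freely on $\R$ is abelian, so $\pi_1(\overbar M)$ is an abelian normal subgroup of $\pi_1(M)$. The Plante--Ghys theorem then forces $\varphi$ to be a suspension or a lift of a geodesic flow, after which the identification of $\overbar M$ in cases (2) and (3) is a short direct computation. Your appeal to ``$\pi_1$-filling'' properties of periodic orbits is replaced by this one-line application of H\"older.
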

In particular this theorem extends the classical result of transitivity on regular cover of geodesic flows on negatively curved surfaces \cite{Hedlund39} to the much wider class of $\R$-covered Anosov flows.
Our proof uses the recent work of the first author with C.~Bonatti and K.~Mann in \cite{BBM24} about non-transitive Anosov-like actions. Theorem \ref{thm: transitive cover no infinite chains} gives another consequence of that work for some non-$\R$-covered flows.

\subsection{Transitive Anosov flows and the homotopical characterization of suspension flows}

As mentioned above, in \cite[Question 3.1]{RodriguezHertz2008}, F.~Rodriguez Hertz, J.~Rodriguez Hertz, and R.~Ures asks about the existence of transitive Anosov diffeomorphisms on non-compact manifolds in general and on surfaces in particular. This question came naturally in their setting, as, if one could prove that such examples did not exists, one could improve their result about accessibility of certain partially hyperbolic diffeomorphisms,  see \cite[Theorem 1.3]{RodriguezHertz2008}. 

As far as we are aware, there has not been a lot of progress on this question, except for a result showing the non-existence of such examples under additional regularity assumptions; see \cite{ovadia2024anosovdiffeomorphismsopensurfaces}.

While we do not answer that question for Anosov diffeomorphisms here, we answer one version for flows, showing that there exists (a lot of) Anosov flows on non-compact manifolds that satisfy what is known to be a homotopical characterization of suspensions in the compact case. More precisely, a result due to the work of Barbot and Fenley (see e.g., \cite[Theorem 2.15]{BBGRH_anomalous} for a proof) provides the following characterization of suspensions. 
Before stating it, recall that two flows $\varphi, \psi \colon M \to M$ are said to be \emph{orbit equivalent} if there is a homeomorphism of $M$ that takes the orbits of $\varphi$ to the orbits of $\psi$.

\begin{theorem}[Barbot, Fenley]\label{thm: homotopical charachterization suspension}
    Let $\flow$ be an Anosov flow on a compact $3$-manifold $M$. Then $\flow$ is orbit equivalent to the suspension of an Anosov diffeomorphism on the torus if and only if no two periodic orbits $\alpha$, $\beta$ of $\flow$ are freely homotopic (via a nontrivial homotopy if $\alpha=\beta$).
\end{theorem}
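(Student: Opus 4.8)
The plan is to prove the two implications of the equivalence separately, with essentially all of the difficulty concentrated in the ``if'' direction.

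For the ``only if'' direction I would argue as follows. An orbit equivalence is induced by a homeomorphism of the underlying manifolds, which carries periodic orbits to periodic orbits and (nontrivial) free homotopies to (nontrivial) free homotopies, so it suffices to verify the statement for the suspension $\psi$ of a linear hyperbolic automorphism $A \in \mathrm{SL}_2(\Z)$ of $\T^2$ (every Anosov diffeomorphism of $\T^2$ is topologically conjugate to such). Here $M = M_A$ is the mapping torus of $f_A$, so it fibers over the circle with $\pi_1(M_A) \cong \Z^2 \rtimes \Z = \langle \Z^2,\ \tau \mid \tau v \tau^{-1} = A^{-1}v \rangle$, and periodic orbits of $\psi$ of period $n$ correspond to $f_A$-orbits of periodic points of least period $n$. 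A direct computation shows that such an orbit is represented by the conjugacy class of $g_x = (I - A^{-n})\tilde x\, \tau^n$ for any lift $\tilde x$ of $x$, that its powers are $g_x^{\,j} = (I - A^{-jn})\tilde x\, \tau^{jn}$, and hence, using that $I - A^{-m}$ is invertible over $\Q$ for every $m \neq 0$ by hyperbolicity of $A$, that: (i) $g_x$ and $g_y$ are conjugate in $\pi_1(M_A)$ if and only if $x$ and $y$ lie on the same $f_A$-orbit; and (ii) the centralizer of $g_x$ equals $\langle g_x \rangle$. Statement (i) gives that no two distinct periodic orbits of $\psi$ are freely homotopic, and (ii) gives that no periodic orbit is freely homotopic to itself via a nontrivial homotopy.

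For the ``if'' direction, assume $\flow$ has no two freely homotopic periodic orbits (with the self-homotopy caveat); in the language of the orbit space $\mathcal{O}$ of the lifted flow this says that no two points of $\mathcal{O}$ are corners of a lozenge. I would then proceed in three steps. First, I would show $\flow$ is $\R$-covered: by Fenley's analysis of branching in Anosov flows, a non-$\R$-covered flow has a pair of non-separated leaves in its lifted weak stable foliation, which produces a chain of lozenges and hence distinct freely homotopic periodic orbits, contradicting the hypothesis. Second, invoking the Barbot--Fenley dichotomy ($\R$-covered $\Rightarrow$ skewed or of product type), I would rule out the skewed case, since there every periodic orbit is freely homotopic to another one along a lozenge chain; so $\flow$ is product $\R$-covered. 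Third, by the theorem of Plante (and Barbot) on Anosov flows whose weak foliations are $\R$-covered of product type, such a flow admits a global cross-section, so $M$ fibers over $S^1$ transversely to $\flow$ and $\flow$ is orbit equivalent to the suspension of the first-return map, which is an Anosov diffeomorphism of a closed surface and therefore of $\T^2$.

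The main obstacle is the first two steps of the ``if'' direction: that a non-$\R$-covered Anosov flow and a skewed $\R$-covered one each contain a pair of freely homotopic periodic orbits is exactly the deep structural input (Fenley's and Barbot's theory of non-separated leaves and lozenges), whereas the reduction ``product $\R$-covered $\Rightarrow$ suspension'' and the $\pi_1$ computation above are comparatively soft. A secondary point requiring care throughout is the bookkeeping of orientations and of the ``nontrivial homotopy'' clause, since in the skewed case the freely homotopic partner of a periodic orbit typically appears with the reversed orientation, and one must ensure that such homotopies are genuinely excluded by the hypothesis.
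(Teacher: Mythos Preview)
The paper does not actually prove this theorem: it is stated as a background result due to Barbot and Fenley, with a pointer to an external reference for a proof. So there is no in-paper argument to compare your proposal against.

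That said, your outline is the standard route and matches the structural tools the present paper recalls in Section~\ref{section: orbit space}: the trichotomy for the orbit space (Theorem~\ref{theorem: trichotomy}), the fact that nonseparated leaves force lozenges whose corners are fixed by a common nontrivial element (Proposition~\ref{prop: free homotopy} combined with Axiom~\ref{Anosov-like axiom branching}), the analogous fact in the skew case, and the characterisation of suspensions via the trivial bifoliated plane. One minor attribution point: in your Step~3 the relevant result is usually credited to Solodov (as the paper notes just after Theorem~\ref{theorem: trichotomy}) rather than Plante. Your caution about orientations is well placed: in the skew case the two corners of a lozenge project to orbits that are freely homotopic only as \emph{unoriented} curves (one is homotopic to the inverse of the other), and indeed Proposition~\ref{prop: free homotopy} in this paper carries exactly that qualifier; the theorem as phrased here should be read accordingly.
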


We show
\begin{thmintro}\label{thmintro: noncompact_suspension}
    There exists transitive Anosov flows $\overbar{\flow}$ on non-compact $3$-manifolds $\overbar{M}$ such that every periodic orbit of $\overbar{\flow}$ is alone in its free homotopy class.
\end{thmintro}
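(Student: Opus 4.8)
The plan is to produce the examples as infinite covers of a suitable transitive Anosov flow on a compact $3$-manifold, using Theorem B to control transitivity and a homotopy-lifting argument to kill all free homotopies between periodic orbits. First I would start with a transitive Anosov flow $\flow$ on a compact $3$-manifold $M$ that is \emph{not} a suspension; by Theorem \ref{thm: homotopical charachterization suspension} such a flow does have freely homotopic periodic orbits, so there is genuine work to do. The key point is that free homotopy classes of periodic orbits are detected in $\pi_1(M)$ up to conjugacy: two periodic orbits $\alpha,\beta$ are freely homotopic in $\overbar M$ if and only if some conjugates of the elements of $\pi_1(\overbar M)$ they represent are equal \emph{and} the homotopy between them lifts, which happens precisely when the corresponding conjugacy classes in $\pi_1(M)$ both survive in the covering subgroup $\overbar{\pi}=\pi_1(\overbar M)\le\pi_1(M)$. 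So the strategy is to choose the covering subgroup $\overbar\pi$ so that (a) no two distinct conjugacy classes coming from periodic orbits of $\flow$ get identified upstairs (and no nontrivial self-homotopy survives), while (b) $\overbar\flow$ remains transitive.

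For step (b) I would invoke Theorem B (equivalently Theorem \ref{thmintro: Y-full}): it suffices to take $\overbar M$ to be a regular \emph{abelian} cover $Y$ for which $\flow$ is $Y$-full, and by the remark following Theorem B (via Theorem \ref{theorem: construct Y-full cover} and, if necessary, a preliminary finite cover supplied by Agol and Przytycki--Wise) such covers exist in abundance as long as $M$ is not a graph manifold — so I would additionally pick $M$ carrying a transitive non-suspension Anosov flow with $M$ not a graph manifold (e.g.\ many surgered geodesic-flow examples, or flows on hyperbolic $3$-manifolds). For step (a), the free homotopy phenomenon for Anosov flows on compact manifolds is governed by \emph{lozenges} in the orbit space: freely homotopic periodic orbits correspond to chains of lozenges, and the relevant deck-translation elements generate a subgroup that in the known examples is (virtually) $\Z$ or $\Z^2$ coming from a Seifert or JSJ piece. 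The idea is that passing to an infinite cyclic (or $\Z^2$) cover dual to such a piece "unrolls" every lozenge chain: the two periodic orbits that were freely homotopic downstairs lift to orbits whose representatives in $\pi_1(\overbar M)$ are no longer conjugate, because the conjugating element is exactly a generator that we have quotiented out of the covering group. One then checks that finitely many such unrollings — or a single well-chosen abelian cover detecting all the relevant homology classes — simultaneously destroy all free homotopies.

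The main obstacle I expect is reconciling (a) and (b): the cover that is large enough to be $Y$-full (hence transitive) must also be "unrolled enough" that \emph{every} lozenge chain is broken, and a priori there could be infinitely many free homotopy classes to handle. I would resolve this by using the structure theory of free homotopy classes for Anosov flows on compact manifolds — in a given compact $M$ the periodic orbits that are freely homotopic to some other orbit are organized into finitely many "chains", each associated with a Seifert piece of the JSJ decomposition or (in the $\R$-covered suspension-like case) with a single $\Z^2$ — so it is enough to choose $\overbar\pi$ to intersect each of these finitely many peripheral $\Z$ or $\Z^2$ subgroups trivially (or in a subgroup small enough to separate the relevant conjugacy classes), while still projecting onto enough of $H_1$ to guarantee $Y$-fullness. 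Once $\overbar M$ is fixed this way, transitivity follows from Theorem B and "alone in its free homotopy class" follows from the lifting criterion, completing the construction; verifying the two conditions are compatible for an explicit family (and recording that family) is where the real content lies, and I would carry it out on a concrete example such as a geodesic flow surgered along a single closed geodesic, where the lozenge chains and the peripheral subgroups are completely explicit.
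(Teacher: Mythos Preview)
Your overall strategy matches the paper's: build a compact transitive Anosov flow with only finitely many periodic orbits that are not alone in their free homotopy class, then pass to an infinite abelian cover that (i) stays transitive via the $Y$-full criterion and (ii) kills those finitely many orbits by pushing their homotopy classes out of the covering subgroup. The paper does exactly this, using Theorem~\ref{theorem: rfrs cover} (which packages Theorems~\ref{thmintro: Y-full} and~\ref{theorem: construct Y-full cover} together with the virtual RFRS property) for both (i) and (ii) simultaneously.

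However, there is a genuine gap at the point where you invoke a ``structure theory of free homotopy classes'' to assert that the non-alone periodic orbits are organized into finitely many chains attached to Seifert pieces. No such general theorem exists. In an $\R$-covered skew flow --- in particular a geodesic flow, or many of its Goodman/Fried surgeries --- \emph{every} periodic orbit is the corner of a lozenge and hence freely homotopic to another periodic orbit; there are infinitely many such free-homotopy pairs and they are not localized near any torus or Seifert piece. Your proposed concrete example, a geodesic flow surgered along a single closed geodesic, typically remains $\R$-covered skew and therefore fails the finiteness hypothesis you need. The paper even remarks that finiteness for non-$\R$-covered flows on hyperbolic manifolds is only \emph{plausible}, citing recent work of Fenley, not established.

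This is why the paper does not start from an arbitrary non-suspension flow but instead \emph{constructs} one with the required finiteness property: it takes a suspension, performs an attracting and a repelling DA bifurcation on two orbits, and reglues along the resulting transverse tori (Construction~\ref{construction: gluing suspension}). The heart of the argument is then Claim~\ref{claim: no other free homotopy}, which shows --- using the non-corner criterion for lozenges and the fact that the complement of the glued torus is a DA of a suspension --- that exactly four periodic orbits $\alpha_1,\alpha_2,\beta_1,\beta_2$ are non-alone, and that all four sit on branching leaves. The resulting manifold has a single atoroidal JSJ piece, so it is virtually RFRS, and Theorem~\ref{theorem: rfrs cover} then produces a transitive infinite cover in which no branching leaf is periodic; the four bad orbits therefore have no closed lift, and every remaining periodic orbit is alone. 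Your proposal is missing precisely this construction and the verification of finiteness; without it, step~(a) cannot be carried out.

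A smaller point: your description of the lifting mechanism is slightly off. It is not that a ``conjugating element'' is removed from the covering group; rather, one arranges that the homotopy class of each bad orbit itself (equivalently, the element of $\pi_1(M)$ fixing the corresponding corner) has no power in $\pi_1(\overbar M)$, so those orbits simply have no periodic lift at all.
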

Note that if one can show that such examples admit a surface of section, then it would provide an example that answer the question of F.~Rodriguez Hertz, J.~Rodriguez Hertz, and R.~Ures.

Such examples also gives examples of actions that satisfy all the conditions of a topologically transitive Anosov-like action except for one. See Theorem \ref{theorem: rfrs cover} and Section \ref{section: non-compact suspension like}.

\begin{rem}
    Theorems \ref{thmintro: ubiquitous existence} and \ref{thmintro: Y-full} can easily (often with no or only cosmetic changes to the arguments) be extended to pseudo-Anosov flows, but we chose to stay within the realm of Anosov flows in this article.
\end{rem}

\subsection*{Outline}
In Section \ref{section: orbit space}, we recall some definitions and results on Anosov-like actions, with a focus on those induced by Anosov flows on compact 3-manifolds. 

In Section \ref{section: rfrs covers}, we prove Theorem  \ref{thmintro: Y-full}, as well as Theorem \ref{theorem: construct Y-full cover} which gives a simple homological condition for a flow to be $Y$-full. Finally, we prove Theorem \ref{thmintro: ubiquitous existence}.  

Section \ref{section: regular covers} is dedicated to the proof of Theorem \ref{thmintro: R-covered} and its generalization, Theorem \ref{thm: transitive cover no infinite chains}.

Finally, in Section \ref{section: non-compact suspension like}, we construct a family of examples to prove Theorem \ref{thmintro: noncompact_suspension}.

\begin{acknowledgement}
    The authors thank Katie Mann for many helpful discussions. Both authors were partially supported by the NSERC grants ALLRP 598447 - 24 and RGPIN-2024-04412.
\end{acknowledgement}


\section{Anosov-like actions and structure of the orbit space} \label{section: orbit space}

Recall that a (pseudo-)Anosov flow $\flow$ on a 3-manifold $M$ induces an action of $\pi_1(M)$ on the \emph{orbit space} $P_\flow$ of $\flow$. The orbit space is defined as the quotient of the universal cover $\widetilde{M}$ of $M$ under the relation ``being on the same orbit of the lifted flow $\widetilde\flow$''. A fundamental result of Barbot \cite{barbot_1995} and Fenley \cite{fenley1994_anosov_flow} is that the orbit space is a topological plane. The stable and unstable foliations $\cF^s$ and $\cF^u$ associated to $\varphi$ lift to foliations $\widetilde{\cF}^s$, $\widetilde{\cF}^u$ of $\widetilde \flow$, which then descend to a pair of transverse foliations of $P_\flow$, giving it a structure of a bifoliated plane $\orbitspace$. Moreover, the induced action of $\pi_1(M)$ on the orbit space is an example (see \cite[Proposition 2.2]{BBM24}) of the general class of \emph{Anosov-like} action introduced in \cite{BFM2022orbit,BBM24}.

\begin{definition}\label{def: anosov-like}
    A group $G$ acts \emph{Anosov-like} on a bifoliated plane $\PP = (P, \cF^+, \cF^-)$ (possibly with isolated prong singularities) if the action satisfies all of the following axioms:
    \begin{enumerate}[label = \textcolor{red}{({A}\arabic*)}]
        \item \label{anosov-like axiom hyperbolic} If a nontrivial element $g \in G$ fixes a leaf $l \in \cF^\pm$, then it has a unique fixed point $x \in l$ and its action is topologically contracting on one leaf through $x$ and topologically expanding on the other. 
        
        \item \label{anosov-like axiom dense fixed leaves} The set of $\cF^+$-leaves that are fixed by some nontrivial element of $G$ is dense in $\PP$, as is the set of $\cF^-$-leaves that are fixed by some nontrivial element of $G$.

        \item \label{anosov-like axiom singularity} Each singular point is fixed by some nontrivial element of $G$.

        \item \label{Anosov-like axiom branching} If $l \in \cF^\pm$ is a leaf that is nonseparated from another leaf $l^\prime$ in the corresponding leaf space, then $l$ is fixed by some nontrivial element of $G$.

\end{enumerate}
\end{definition}

We refer to \cite{BFM2022orbit,BBM24} and the monograph \cite{BM_book} for more background on Anosov-like actions. Note  that the definition taken in \cite{BFM2022orbit} only dealt with the transitive case,  with the non-transitive definition given in \cite{BBM24}. Moreover, both \cite{BFM2022orbit} and \cite{BBM24} ask for an additional axiom that is not needed here. See \cite{BM_book} for more details. 

Among the axioms in Definition \ref{def: anosov-like}, \ref{anosov-like axiom hyperbolic}--\ref{anosov-like axiom singularity} reflect the fundamental properties of $\pi_1(M) \acts \orbitspace$ such as hyperbolicity and density of the leaves of periodic orbits. However, the proof of \cite{fenley98_structure_of_branching} that the induced action of a pseudo-Anosov flow satisfy Axiom \ref{Anosov-like axiom branching} relies in an essential way on the compactness of the 3-manifold $M$.

When the Anosov-like action is one induced by a pseudo-Anosov flow, the transitivity of the flow can be read from the action: Smale's decomposition theorem implies that $\flow$ is transitive if and only if $\pi_1(M)\acts (P_\flow,\widetilde\cF^s,\widetilde\cF^u)$ is topologically transitive, or equivalently if and only if the set of points that are fixed by some elements of $\pi_1(M)$ is dense in $\orbitspace$ (see, e.g., \cite[Theorem 5.3.50]{fisher_hasselblatt_hyperbolic_flows} for the smooth Anosov case and  \cite{BBM24} for general topological pseudo-Anosov flows). More generally, work of \cite{BBM24} implies that one always has the following equivalences:
\begin{theorem}[\cite{BM_book}, Theorem 2.7.2]\label{thm_characterization_transitive}
Suppose a group $G$ acts Anosov-like on a bifoliated plane $\cP = (P, \cF^+, \cF^-)$. The following are equivalent:
\begin{enumerate}[label=(\roman*)]
\item \label{item_charac_top_transitive} The action of $G$ is topologically transitive;
\item \label{item_charac_dense_fixed_points} The set of points fixed by nontrivial elements of $G$ is dense in $\cP$;
\item \label{item_charac_dense_leaves} For any leaf $l$ of $\cF^\pm$, $G\cdot l$ is dense in $\cP$.
\end{enumerate}
\end{theorem}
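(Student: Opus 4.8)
The plan is to prove the chain of implications \ref{item_charac_top_transitive} $\Rightarrow$ \ref{item_charac_dense_fixed_points} $\Rightarrow$ \ref{item_charac_dense_leaves} $\Rightarrow$ \ref{item_charac_top_transitive}. The engine of the argument would be a purely topological replacement for the contraction estimates one has for smooth flows, which I will call the \emph{leaf-attracting lemma}: if $p \in P$ is fixed by a nontrivial $g \in G$, then by Axiom \ref{anosov-like axiom hyperbolic} the element $g$ contracts one of the two leaves $\cF^\pm(p)$ and expands the other, so, after possibly replacing $g$ by $g^{-1}$, we may assume it contracts $\cF^-(p)$ and expands $\cF^+(p)$; one then shows that $g^n(l) \to \cF^+(p)$ uniformly on compact sets for every $\cF^+$-leaf $l$ crossing $\cF^-(p)$, and symmetrically $g^{-n}(m) \to \cF^-(p)$ for every $\cF^-$-leaf $m$ crossing $\cF^+(p)$. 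I would prove this by first checking the convergence inside a product box around $p$, where it is immediate from continuity of the two foliations and the contraction of $g$ along $\cF^-(p)$, and then propagating it along $\cF^+(p)$ by applying powers of $g$, which exhausts the whole leaf.

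For \ref{item_charac_dense_fixed_points} $\Rightarrow$ \ref{item_charac_dense_leaves}, I would fix a leaf $l_0$ and set $R := \overline{G \cdot l_0}$, the closure of the union of the leaves $g(l_0)$; it is closed, $G$-invariant, and --- using Axiom \ref{Anosov-like axiom branching} to control limits of leaves at non-separated leaves --- saturated by $\cF^+$. Now take $z \in R$ (so $\cF^+(z) \subseteq R$) and a small product box $B$ around $z$. Any fixed point $p \in B$ has $\cF^-(p)$ crossing $\cF^+(z)$ inside $B$; letting $w$ be that crossing point, $w \in \cF^+(z) \subseteq R$, and the leaf-attracting lemma applied to the element $g$ fixing $p$ (normalized so it contracts $\cF^-(p)$) gives $g^n(\cF^+(w)) \to \cF^+(p)$, whence $\cF^+(p) \subseteq R$ and in particular $p \in R$. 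Since the fixed points are dense by \ref{item_charac_dense_fixed_points}, the fixed points inside $B$ are dense in $B$, and all of them lie in the closed set $R$, so $B \subseteq R$; thus $z$ is interior to $R$. As $z$ was arbitrary, $R$ is open, hence --- being nonempty, closed, and open in the connected plane $P$ --- equal to $P$, which is \ref{item_charac_dense_leaves}.

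For \ref{item_charac_dense_leaves} $\Rightarrow$ \ref{item_charac_top_transitive}, given nonempty open sets $U, V$, I would shrink $U$ to a product box, pick an $\cF^+$-leaf $l$ meeting it, and use the density of $G \cdot l$ together with the local product structure (and the density of $G \cdot m$ for a transverse leaf $m$, to reposition the crossing point) to find $g$ with $g(U) \cap V \neq \emptyset$; this gives topological transitivity, and a Baire argument then shows that the points with dense orbit form a residual set. For \ref{item_charac_top_transitive} $\Rightarrow$ \ref{item_charac_dense_fixed_points}, which is the analogue of the Anosov closing lemma, I would use a dense orbit to produce, near any prescribed point, a product box $B$ and an element $g$ with $g(B)$ stretched across $B$ in the $\cF^+$-direction and compressed into $B$ in the $\cF^-$-direction, and then a topological hyperbolic fixed-point argument --- legitimate because $G$ acts by homeomorphisms preserving both foliations --- to extract a fixed point of $g$ inside $B$; such a point lies on two $g$-invariant leaves, so Axiom \ref{anosov-like axiom hyperbolic} applies consistently. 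Since the prescribed point was arbitrary, the fixed points are dense.

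The step I expect to be the main obstacle is \ref{item_charac_top_transitive} $\Rightarrow$ \ref{item_charac_dense_fixed_points}: turning topological transitivity into a genuine fixed point calls for a version of the Anosov closing lemma in the purely topological, bifoliated-plane setting, where there are no contraction rates to make a shadowing argument quantitative --- only the topological hyperbolicity of Axiom \ref{anosov-like axiom hyperbolic}. Getting the ``box stretched across itself'' configuration out of a dense orbit, and extracting an honest fixed point from it, is delicate, and this is where I would expect to lean on the structure theory for (possibly non-transitive) Anosov-like actions of \cite{BBM24}. A secondary, pervasive technicality is that a leaf need not depend continuously on its basepoint precisely at non-separated leaves; Axiom \ref{Anosov-like axiom branching} is exactly what controls this, and it must be invoked whenever one passes to a limit of leaves --- in particular in checking that $R$ above is saturated by $\cF^+$.
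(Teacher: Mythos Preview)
The paper does not prove this theorem: it is quoted verbatim as \cite[Theorem 2.7.2]{BM_book} and used as a black box, so there is no in-paper proof to compare your proposal against.

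For what it is worth, your outline follows the standard route taken in \cite{BBM24,BM_book}: the leaf-attracting lemma, the open-and-closed argument for \ref{item_charac_dense_fixed_points}$\Rightarrow$\ref{item_charac_dense_leaves}, and a topological closing argument for \ref{item_charac_top_transitive}$\Rightarrow$\ref{item_charac_dense_fixed_points}. One small correction: in the step \ref{item_charac_dense_fixed_points}$\Rightarrow$\ref{item_charac_dense_leaves} you do not need Axiom~\ref{Anosov-like axiom branching} to conclude that $R=\overline{G\cdot l_0}$ is $\cF^+$-saturated --- the closure of any $\cF^+$-saturated set in a bifoliated plane is automatically $\cF^+$-saturated, by the local product structure alone. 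Axiom~\ref{Anosov-like axiom branching} enters elsewhere, in the structure theory (Smale classes, wandering lozenges) of \cite{BBM24} that underlies the closing-lemma step you correctly flag as the crux.
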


We further recall some definitions and results regarding the configurations of leaves when a bifoliated plane admits an Anosov-like action. 

\begin{definition}
    If $\cF$ is a (possibly singular) foliation of a manifold, then the \emph{leaf space} of $\cF$, denoted $\Lambda(\cF)$, is the quotient space obtained by identifying each leaf of $\cF$ to a single point, equipped with the quotient topology.
\end{definition}

It is a classical result, originally due to the work of Reeb and Haefliger, that the leaf space of a (non-singular) foliation of a plane is a connected, simply-connected, non-compact, and not necessarily Hausdorff 1-manifold; see e.g., \cite{candel2000foliations}. In the context of Anosov flows, we have the following standard terminology.  \par

\begin{definition}
    Let $\cP_\varphi = (P_\varphi, \widetilde{\cF}^s, \widetilde{\cF}^u)$ be the orbit space of $\varphi$. Then $\varphi$ is said to be \emph{$\R$-covered} if both leaf spaces $\Lambda(\widetilde{\cF}^s)$ and $\Lambda(\widetilde{\cF}^u)$ are Hausdorff, i.e., homeomorphic to $\R$.
\end{definition}

In fact, the work of Barbot \cite{barbot_1995} and Fenley \cite{fenley1994_anosov_flow} showed that for Anosov flows, the condition for being $\R$-covered can be relaxed to having just one leaf space homeomorphic to $\R$, because it implies that the other leaf space is also homeomorphic to $\R$. Their work also showed a trichotomy for the structure of the orbit space of an Anosov flow on a 3-manifold, which can be extended to the Anosov-like setting:

\begin{theorem}[\cite{BFM2022orbit}, Theorem 2.16]\label{theorem: trichotomy}
    Let $G \acts \cP = (P, \cF^+, \cF^-)$ be an Anosov-like action on a bifoliated plane. Then exactly one of the following holds:
    \begin{enumerate}[label = (\roman*)]
        \item $\cP$ is isomorphic\footnote{Here, by isomorphism we mean a foliation-preserving homeomorphism.} to $\R^2$ foliated by horizontal and vertical lines, in which case $\cP$ is called the \emph{trivial} plane.
        \item $\cP$ is isomorphic to the strip $\{(x, y) \in \R^2 \mid x < y < x + 1\}$ foliated by horizontal and vertical lines, in which case $\cP$ is called a \emph{skew} plane.
        \item Either the foliations are singular, or both leaf spaces $\Lambda(\cF^+)$ and $\Lambda(\cF^-)$ are non-Hausdorff.
    \end{enumerate}
\end{theorem}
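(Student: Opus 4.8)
The three cases are visibly mutually exclusive: in case (i) every leaf of $\cF^+$ meets every leaf of $\cF^-$ while in case (ii) it does not, and (iii) is the only case in which a foliation is singular or a leaf space fails to be Hausdorff. So the content of the statement is that if $\cF^+$ and $\cF^-$ are non-singular and at least one leaf space --- say, after relabelling, $\Lambda(\cF^-)$ --- is Hausdorff, hence $\cong\R$, then $\cP$ falls into case (i) or (ii). I would proceed in three steps: (1) promote the hypothesis to $\Lambda(\cF^+)\cong\R$ as well, which is what makes case (iii) genuinely the complement of (i) and (ii); (2) if moreover every leaf of $\cF^+$ meets every leaf of $\cF^-$, produce a foliation-preserving homeomorphism $\cP\cong\R^2$, giving case (i); (3) otherwise produce a foliation-preserving homeomorphism of $\cP$ onto the skew strip, giving case (ii). The overall architecture is that of the classification of $\R$-covered Anosov flows due to Barbot \cite{barbot_1995} and Fenley \cite{fenley1994_anosov_flow}, with the inputs that they extract from cocompactness of $M$ --- invariance of certain leaves under deck transformations, density of periodic orbits --- replaced here by the axioms of Definition \ref{def: anosov-like}.

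For Step (1), let $\beta\colon P\to\Lambda(\cF^-)\cong\R$ record the $\cF^-$-leaf through a point; it is continuous, has the $\cF^-$-leaves as its fibres, and its restriction to each $\cF^+$-leaf $l$ is a monotone local homeomorphism, hence (as $l\cong\R$) a homeomorphism of $l$ onto an open interval $I(l)\subseteq\R$. To exclude non-Hausdorffness of $\Lambda(\cF^+)$ I would run the Anosov-like analogue of the Barbot--Fenley argument that $\R$-coveredness on one side forces it on the other: two non-separated leaves $u_1\neq u_2$ of $\cF^+$ necessarily have disjoint intervals $I(u_1), I(u_2)$ in $\R$ (using that two transverse leaves of a bifoliated plane meet in at most one point), are each fixed by a nontrivial element of $G$ by Axiom \ref{Anosov-like axiom branching}, which by Axiom \ref{anosov-like axiom hyperbolic} acts as a topological contraction on one of the two leaves through its fixed point and as an expansion on the other; reading off the induced dynamics on the transverse leaf space $\Lambda(\cF^-)\cong\R$ and iterating then produces $\cF^+$-leaves converging to $u_1$ whose $\beta$-images are forced into positions incompatible with $\Lambda(\cF^-)$ being Hausdorff. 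So $\Lambda(\cF^+)\cong\R$ as well, and henceforth the two foliations may be interchanged; for an $\cF^-$-leaf $m$ I write $J(m)$ for its (open) image in $\Lambda(\cF^+)\cong\R$.

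For Step (2), suppose every leaf of $\cF^+$ meets every leaf of $\cF^-$, equivalently $I(l)=\R$ for all $l$ (equivalently $J(m)=\Lambda(\cF^+)$ for all $m$). The map $\Theta\colon P\to\Lambda(\cF^+)\times\Lambda(\cF^-)\cong\R^2$ sending a point to the pair of leaves through it is injective (two transverse leaves meet at most once), surjective (any two leaves of the two foliations meet, by hypothesis), and both continuous and open, since inside a flow-box chart $\Theta$ is a product of two interval charts. Hence $\Theta$ is a homeomorphism identifying $\cP$ with $\R^2$ carrying its foliations by horizontal and vertical lines --- case (i).

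For Step (3), suppose instead that there are leaves $l_0\in\cF^+$ and $m_0\in\cF^-$ with $l_0\cap m_0=\varnothing$. Write $I(l)=(a(l),b(l))$ with $a,b\colon\Lambda(\cF^+)\cong\R\to\R\cup\{\pm\infty\}$ monotone. The crucial claim --- and the step I expect to be the main obstacle --- is that $a$ and $b$ are everywhere finite and are strictly increasing homeomorphisms of $\R$ (and symmetrically for the endpoint maps of $\cF^-$-leaves). This is exactly where the axioms do the real work: Axiom \ref{anosov-like axiom dense fixed leaves} provides a dense set of leaves fixed by nontrivial elements of $G$, Axiom \ref{anosov-like axiom hyperbolic} makes the action of such an element on the transverse leaf space $\R$ genuinely hyperbolic, and together these exclude a leaf whose $\beta$-image is $\R$ or a half-line (near a would-be finite endpoint one locates a fixed $\cF^-$-leaf whose attracting or repelling behaviour prevents $l_0$ from crossing any further) and exclude plateaux of $a$ and $b$ (which would produce non-separated $\cF^+$-leaves, already ruled out in Step (1), or contradict the density of fixed leaves); since transitivity is \emph{not} assumed here, Theorem \ref{thm_characterization_transitive} is unavailable, so boundedness of the intervals $I(l)$ must be propagated from $l_0$ to every leaf through the dense set of fixed leaves rather than through density of a single orbit. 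Granting this, $a(x)<b(x)$ for all $x$; recoordinatizing $\Lambda(\cF^+)$ by $a$ we may take $a=\mathrm{id}$, and then $b$ becomes a fixed-point-free increasing homeomorphism of $\R$, hence conjugate to $x\mapsto x+1$; applying that conjugation to both leaf spaces we arrange $I(l_x)=(x,x+1)$ for the $\cF^+$-leaf $l_x$ of coordinate $x$. Then $\Theta$ of Step (2) is a homeomorphism of $P$ onto $\{(x,y)\in\R^2:x<y<x+1\}$ carrying $\cF^+$ to vertical lines and $\cF^-$ to horizontal lines --- case (ii).
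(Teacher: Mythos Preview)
The paper does not prove this theorem; it is quoted verbatim from \cite[Theorem 2.16]{BFM2022orbit} and no argument is given here, so there is no in-paper proof to compare your proposal against.

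On its own merits, your outline follows the Barbot--Fenley architecture that \cite{BFM2022orbit} itself adapts to the Anosov-like setting, so the overall strategy is sound. Two places would need substantially more detail before this could count as a proof. In Step~(1), the sentence ``reading off the induced dynamics on the transverse leaf space $\Lambda(\cF^-)\cong\R$ and iterating then produces $\cF^+$-leaves converging to $u_1$ whose $\beta$-images are forced into positions incompatible with $\Lambda(\cF^-)$ being Hausdorff'' hides the entire argument: you need to explain precisely which element you iterate, why the accumulation leaf you produce is distinct from $u_1$, and where the contradiction with Hausdorffness of $\Lambda(\cF^-)$ actually arises. In Step~(3), you correctly identify the finiteness and strict monotonicity of the endpoint maps $a,b$ as the crux, and you are right that transitivity is not available; but ``near a would-be finite endpoint one locates a fixed $\cF^-$-leaf whose attracting or repelling behaviour prevents $l_0$ from crossing any further'' is again a placeholder rather than an argument, and the propagation of boundedness from a single leaf $l_0$ to all leaves via the dense set of fixed leaves needs to be written out --- this is where the specific interplay of Axioms \ref{anosov-like axiom hyperbolic} and \ref{anosov-like axiom dense fixed leaves} must be made explicit. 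Note also that \cite{BFM2022orbit} works under the transitive definition of Anosov-like (as the present paper remarks just after Definition~\ref{def: anosov-like}), so if you intend your argument to cover the non-transitive case used here, that extension is genuinely on you.
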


When $\cP = \orbitspace$, a result originally due to Solodov, is that the orbit space of $\varphi$ is trivial if and only if $\varphi$ is orbit equivalent to a suspension flow (see, e.g., \cite[Theorem 2.3.3]{BM_book}). When the leaf spaces are non-Hausdorff, $\cP$ contains \emph{nonseparated} leaves, which are leaves whose projections in their respective leaf spaces are nonseparated as points. We will call a leaf that is nonseparated from some other leaves a \emph{branching leaf}. 

Combining \ref{anosov-like axiom hyperbolic} and \ref{Anosov-like axiom branching} of an Anosov-like action, one observes that every branching leaf in $\cP$ contains a point fixed by some nontrivial element of $G$. In the particular case where $\cP = \orbitspace$, it follows that every branching leaf in $\cF^s$ or $\cF^u$ contains a periodic orbit of $\varphi$.

Two rays in a bifoliated plane are said to make a \emph{perfect fit} if they ``almost intersect'' in the sense that one is contained in a boundary leaf of the saturation of the other and vice-versa. 

\begin{figure}[h]
    \centering
    \subfigure[]{\includegraphics[width = 0.3\textwidth]{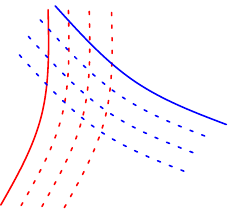}}
    \hspace{1.5cm}
    \subfigure[]{\includegraphics[width = 0.35\textwidth]{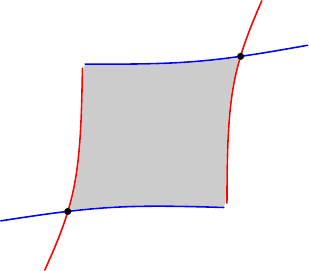}}
    \caption{Two solid leaves making a perfect fit (a); and a lozenge (b)}
    \label{fig: perfect_fit_lozenge}
\end{figure}

\begin{definition}\label{def: lozenge}
    Let $x, y \in \cP = (P, \cF^+, \cF^-)$ such that there exists a ray $A$ of $\cF^+(x)$ making a perfect fit with a ray $D$ of $\cF^-(y)$, as does a ray $B$ of $\cF^-(x)$ with a ray $C$ of $\cF^+(y)$. Then the open region
    \[
    L \coloneqq \{p \in \cP \mid \cF^+(p) \cap B \neq \varnothing, \cF^-(p) \cap A \neq \varnothing\}
    \]
    is called a \emph{lozenge} with \emph{corners} $x, y$ and \emph{sides} $A, B, C, D$.

    The union of a lozenge with all of its corners and sides is called a \emph{closed lozenge}.

    A \emph{chain of lozenges} is a union of closed lozenges such that for any two lozenges $L$ and $L^\prime$ in the chain, there exists lozenges $L_0, L_1, \dots, L_k$ in the chain such that $L_0 = L$, $L_k = L^\prime$, and for all $i \in \{0, 1, \dots k - 1\}$ the pair of lozenges $L_i$ and $L_{i+1}$ share a corner.
\end{definition}

In the definition of a chain of lozenges, we note that lozenges $L_i$ and $L_{i+1}$ may also share a side, but are not required to.

We recall the following important result regarding the relationship between fixed points of an Anosov-like action and corners of lozenges in a bifoliated plane.

\begin{proposition}[\cite{BFM2022orbit}, \cite{Fenley_homotopic_properties}]\label{prop: free homotopy}
Let $G \acts \PP = (P, \cF^+, \cF^-)$ be an Anosov-like action on a bifoliated plane. If some nontrivial element $g \in G$ has distinct fixed points $x, y \in \PP$, then $x$ and $y$ are corners of a chain of lozenges, all corners of which are fixed by $g$.

In particular, if $G \acts \cP$ is the induced action of an Anosov flow $\varphi$, i.e., $G = \pi_1(M)$ and $\cP = \orbitspace$, then $x = \widetilde{\alpha}, y = \widetilde{\beta}$ are the projections of lifts of periodic orbits $\alpha, \beta$ of $\varphi$, where $\alpha, \beta$ are freely homotopic to $g$ as unoriented curves.
\end{proposition}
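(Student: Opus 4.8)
The plan is to reduce the ``in particular'' clause to the first assertion and then to prove the first assertion directly. For the reduction, note that a point $p \in P_\flow$ is fixed by a nontrivial $g \in \pi_1(M)$ exactly when the orbit $\widetilde{o}$ of $\widetilde{\flow}$ lying over $p$ is invariant under the deck transformation $g$; since $\pi_1(M)$ acts freely on $\widetilde{M}$, the element $g$ then acts on the line $\widetilde{o}$ as a nontrivial translation, so $\widetilde{o}/\langle g\rangle$ projects to a periodic orbit $\alpha$ of $\flow$ and represents the conjugacy class of $g$. Hence the two fixed points of $g$ are $\widetilde{\alpha},\widetilde{\beta}$ for periodic orbits $\alpha,\beta$, each freely homotopic to $g$, though only as an unoriented curve since $g$ and $g^{-1}$ share their fixed points. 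Applying the same remark to every corner of the chain furnished by the first assertion yields all the periodic orbits and free homotopies claimed. It therefore suffices to prove: if a nontrivial $g\in G$ fixes two distinct points $x,y\in\PP$, then $x$ and $y$ are corners of a chain of lozenges every corner of which is fixed by $g$.

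For the set-up, recall that by \ref{anosov-like axiom hyperbolic} the element $g$ fixes both leaves of $\PP$ through $x$, with $x$ the only fixed point on each; interchanging the names $\cF^+$ and $\cF^-$ if necessary, assume $g$ contracts $\cF^+(x)$ and expands $\cF^-(x)$. Uniqueness of the fixed point on each leaf forces $y$ into one of the four open quadrants at $x$ (and $x$ into one at $y$). Moreover a leaf of $\cF^+$ and a leaf of $\cF^-$ meet in at most one point, which would be fixed by $g$, so $\cF^+(x)\cap\cF^-(y)=\varnothing$ and $\cF^-(x)\cap\cF^+(y)=\varnothing$.

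The core of the argument is to manufacture a perfect fit from these disjointness relations. Let $\rho\subset\cF^+(x)$ be the half-leaf pointing into the quadrant of $y$. Since $g$ contracts $\cF^+(x)$ towards $x$, it maps $\rho$ strictly into itself and hence maps the $\cF^-$-saturation $V$ of $\rho$ into itself; using $\cF^-(y)\cap\cF^+(x)=\varnothing$ and the planarity of $\PP$ one checks that $y$ lies beyond a boundary leaf $\ell$ of $V$ and that $\ell$ makes a perfect fit with $\rho$. As $g$ preserves $V$ and its far boundary, $g(\ell)=\ell$, so by \ref{anosov-like axiom hyperbolic} $\ell=\cF^-(x_1)$ for a unique $g$-fixed point $x_1$, and the rigidity principle that a perfect fit joining the leaves of two $g$-fixed points completes to a lozenge (see \cite{Fenley_homotopic_properties,BFM2022orbit}) shows that $x$ and $x_1$ are the corners of a lozenge with both corners fixed by $g$. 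At $x_1$ the contracting/expanding type of $g$ is reversed relative to $x$, so one repeats the construction using the appropriate half-leaf of $\cF^-(x_1)$ saturated by $\cF^+$, and continues alternately. This produces $g$-fixed points $x=x_0,x_1,x_2,\dots$ in which each consecutive pair bounds a lozenge and each new lozenge lies strictly between the previous corner and $y$; once one knows the construction is finite it must terminate at $x_n=y$, and $x_0,\dots,x_n$ are the corners of the required chain.

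The step I expect to be the main obstacle is everything hidden in ``$y$ lies beyond a boundary leaf $\ell$ of $V$'' together with the finiteness of the iteration. One must control the frontier of the saturations (which by \ref{Anosov-like axiom branching} may include non-separated leaves --- themselves $g$-periodic, and so legitimately part of the chain), handle the degenerate configurations in which the relevant boundary leaf makes a perfect fit with the other half-leaf of $x$ rather than with $\rho$, and, above all, exclude an infinite chain between $x$ and $y$: this is automatic from compactness of the manifold in the classical case but in the Anosov-like setting it requires the planar combinatorics of lozenge chains developed in \cite{BFM2022orbit}. I would organize the whole analysis around the position of $\cF^+(y)$ in the leaf space $\Lambda(\cF^+)$ relative to $\cF^+(x)$ and its $g$-translates, using that $\cF^+(x)$ is a repelling fixed point of the action induced by $g$ on $\Lambda(\cF^+)$.
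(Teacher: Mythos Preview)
The paper does not give its own proof of this proposition; it is stated as a known result with citations to \cite{BFM2022orbit} and \cite{Fenley_homotopic_properties}. There is thus no ``paper's proof'' to compare against. Your sketch follows precisely the classical approach of those references: deduce from \ref{anosov-like axiom hyperbolic} that the leaves through $x$ and $y$ are disjoint, build a perfect fit from the boundary of the $\cF^-$-saturation of the appropriate half-leaf of $\cF^+(x)$, complete it to a $g$-invariant lozenge, and iterate with the roles of $\cF^+$ and $\cF^-$ swapped at each new corner. The reduction of the ``in particular'' clause to the first assertion is correct as written.

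You have also correctly located the genuine content: that the relevant boundary leaf $\ell$ is $g$-invariant (this follows since $g$ fixes $y$ and hence the complementary component of $V$ containing $y$, so $g$ fixes the specific boundary leaf separating that component), and above all the finiteness of the chain between $x$ and $y$. In Fenley's original argument finiteness uses compactness of $M$; in \cite{BFM2022orbit} it is obtained from the combinatorics of totally ideal quadrilaterals and the axiom \ref{anosov-like axiom dense fixed leaves}, and is indeed the substantial step. Since you explicitly defer these points to the cited references, your write-up is an accurate summary of the existing proof rather than an independent one, which is exactly what the paper itself does.
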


\section{Homological conditions for transitivity} \label{section: rfrs covers}

In the next two subsections, we will prove Theorem \ref{thmintro: Y-full} and deduce (a more general version of) Theorem \ref{thmintro: ubiquitous existence}. We start by recalling the notions of covers we are using in this article.

\begin{definition} \label{def: covers}
    A \emph{cover} $\overbar{M}$ of a manifold $M$ is a quotient $\overbar{M} = \widetilde{M}/H$ where $H$ is a subgroup of $\pi_1(M)$. Let $G = G(\overbar{M})$ be the group acting on $\overbar{M}$ by deck transformations. The cover is \emph{regular} if $H$ is a normal subgroup of $\pi_1(M)$, in which case we have a natural identification $G\cong \pi_1(M)/H$ as well as a base-point independent identification $H\cong \pi_1(\overbar{M})$. The cover is called \emph{abelian} if $G$ is an abelian group.

    The \emph{maximal abelian cover} of $M$, denoted $M_\mathrm{ab}$, is the regular abelian cover whose fundamental group $\pi_1(M_\mathrm{ab})\cong [\pi_1(M), \pi_1(M)]$ is the commutator subgroup of $\pi_1(M)$, and the associated group of deck transformations is $H_1(M;\Z)\cong \pi_1(M) / [\pi_1(M), \pi_1(M)]$. 
\end{definition}

Note that given any regular abelian cover $\overbar{M}$ with associated group of deck transformations $G\cong \pi_1(M)/\pi_1(\overbar{M})$, then the fact that $G$ is abelian implies that $[\pi_1(M), \pi_1(M)]$ must be a subgroup of $\pi_1(\overbar{M})$, and therefore $\overbar{M}$ is covered by $M_\mathrm{ab}$, hence the name. In particular, the deck transformation group $G$ of such a regular abelian cover can always be identified with a quotient of $H_1(M;\Z)\cong \pi_1(M) / [\pi_1(M), \pi_1(M)]$.




\subsection{Y-full flows}

Throughout this subsection, we assume that $Y$ is an \emph{abelian} regular cover of $M$, and $G$ is the (abelian) group of covering transformations of $Y$. In particular, $G \cong \pi_1(M) / \pi_1(Y)$. 

Given a periodic orbit $\gamma$ of $\flow$ and an element $g\in G$, we will say that \emph{$g$ is represented by $\gamma$} if a lift $\gamma_Y$ of $\gamma$ to $Y$ is left invariant by $g$, and such that for all $x_Y\in \gamma_Y$, $g \cdot x_Y= \flow_Y^T(x_Y)$ for some fixed \emph{positive} $T$. Equivalently, $g$ is represented by $\gamma$ if and only if there exists an element $h\in \pi_1(M)$ such that $g= h\pi_1(Y)$ (for the isomorphism $G \cong \pi_1(M) / \pi_1(Y)$) and $h$ is freely homotopic to $\gamma$ as \emph{oriented} curves.
Note that a lift $\gamma_Y$ is a periodic orbit in $Y$ if and only if $\gamma$ is represented by the identity in $G$.

Recall the following definition:
\begin{definition}[Dougall--Sharp \cite{DS21}]\label{def: Y-full}
    Let $\varphi \colon M \to M$ be an Anosov flow, and let $Y$ be a regular abelian cover of $M$ with group of deck transformations $G$. The flow $\varphi$ is said to be $\emph{Y-full}$ if every class in $G$ is represented by a periodic orbit of $\varphi$.
\end{definition}

In the particular case where $Y= M_{\mathrm{ab}}$ is the maximal abelian cover of $M$, then being $M_{\mathrm{ab}}$-full is called being \emph{homologically full}; see \cite{Sharp_1993}.
As we recalled in the introduction, Theorem \ref{thmintro: Y-full} is an extension of a result of Gogolev and Rodriguez Hertz in \cite{gogolev2020abelianlivshitstheoremsgeometric} where they proved that a transitive Anosov flow is homologically full if and only if its lift to the maximal abelian cover is transitive. 
Here we adapt and extend their argument to deal with other abelian covers.

To prove Theorem \ref{thmintro: Y-full}, it is enough to only consider non-torsion elements in $G$. 
For simplicity we introduce the following:
\begin{notation}
   For a group $G$, we denote by $G_T$ the torsion subgroup of $G$, i.e., $G_T = \tor(G)$, and by $\overbar{G}_T$ the quotient group $G / G_T$.  We also write $H_1(M)$ for $H_1(M; \Z)$.  In particular, $\overbar{H_1(M)}_T = H_1(M) / H_1(M)_T = H_1(M; \Z) / \tor(H_1(M; \Z))$. 
\end{notation}

Recall from the beginning of this section that $G$ can always be identified with some quotient of $H_1(M)$, thus $\overbar{G}_T$ is also identified with some quotient of $\overbar{H_1(M)}_T$.



Now we complete the proof of Theorem \ref{thmintro: Y-full}.

\begin{proof}[Proof of Theorem \ref{thmintro: Y-full}]
    The fact that if the lifted flow on $Y$ is transitive then $\flow$ is $Y$-full was proved in \cite[Lemma 7.4]{DS21}. So we only prove the other direction. 
    
    Suppose that $\varphi$ is $Y$-full. Our goal is to show that the lifted flow $\flow_Y \colon Y \to Y$ has a dense set of periodic orbits, which then by a classical result (for example, see \cite[Theorem 5.3.50]{fisher_hasselblatt_hyperbolic_flows}) implies that $\flow_Y$ is transitive.

    Since $\overbar{G}_T \cong \overbar{H_1(M)}_T / K$, where $K$ is the kernel of some surjective map from $\overbar{H_1(M)}_T$ to $\overbar{G}_T$, we can consider any class $[gK]$ in $\overbar{G}_T$ as a class $[g]$ in $\overbar{H_1(M)}_T$.

    Fix a small $\varepsilon > 0$, and let $0<\delta<\varepsilon$ be a constant for which the Anosov closing lemma applies (see e.g., \cite[Theorem 5.3.10]{fisher_hasselblatt_hyperbolic_flows}), i.e., if an orbit segment of $\flow$ comes back at distance at most $\delta$ from its starting point, then there exists a periodic orbit that $\varepsilon$-shadows it.
    
    Now let $\gamma$ be an $\delta$-dense periodic orbit of $\varphi$. If $[\gamma]$ is trivial in $\overbar{G}_T$, then lifts of (a power of) $\gamma$ to $Y$ are closed orbits and their union form an $\delta$-dense family of close orbits of $\varphi_Y$. So for the rest of this proof, suppose that $[\gamma]$ is nontrivial in $\overbar{G}_T$. By the assumption that $\varphi$ is $Y$-full, we know there exists a periodic orbit $\gamma^\prime$ of $\varphi$ such that $[\gamma] + [\gamma^\prime]$ is trivial in $\overbar{G}_T$. Consider $[\gamma]$ and $[\gamma^\prime]$ as elements of $\overbar{H_1(M)}_T$. 
    
    Since $\gamma$ is $\delta$-dense, for any $y \in \gamma^\prime$, there exists some $x \in \gamma$ such that $y$ is contained in the $\delta$-neighborhood of $x$. By the Anosov closing lemma, there exists a periodic orbit $\eta$ of $\flow$ in the $\varepsilon$-neighborhood of the pseudo-orbit obtained from concatenating $\gamma$ and $\gamma^\prime$ through $x$ and $y$. It follows that $\eta$ is $\varepsilon$-dense in $M$. Using Fried's ``pair of pants'' construction (see \cite[\S 2]{fried1983transitive}), we can construct an immersed surface whose boundary consists of $\gamma, \gamma^\prime$, and $\eta$. Then $\eta$ is homologous to $\gamma + \gamma_1$, so $[\eta] = [\gamma] + [\gamma^\prime]$ is trivial in $\overbar{G}_T$. Then $\eta$ lifts to an $\varepsilon$-dense union of closed orbits of $\varphi_Y$ in $Y$. This proves that $\varphi_Y$ is transitive. 
\end{proof}

The technique used in the proof of Theorem \ref{thmintro: Y-full} can also be used to give an easily verifiable condition to build abelian covers $Y$ for which $\flow$ is $Y$-full.

\begin{theorem}\label{theorem: construct Y-full cover}
    Let $\flow \colon M \to M$ be a transitive Anosov flow. Suppose that there exists linearly independent nontrivial elements $[\alpha_1], \dotsc, [\alpha_n] \in \overbar{H_1(M)}_T$ such that for each $i = 1, \dots, n$, both $[\alpha_i]$ and $-[\alpha_i]$ are represented by periodic orbits of $\varphi$. Let $A = \mathrm{span}(\{[\alpha_1], \dotsc, [\alpha_n]\})$, and write $\overbar{H_1(M)}_T = A \oplus K$ for some normal subgroup $K \lhd \overbar{H_1(M)}_T$. Let $Y$ be the abelian cover of $M$ associated to $K$, i.e., the cover whose group of deck transformations $G \cong \overbar{H_1(M)}_T / K$. Then the lifted flow $\flow_Y \colon Y \to Y$ is transitive, and in particular, $\flow$ is $Y$-full.
\end{theorem}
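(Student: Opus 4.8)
The plan is to show that $\varphi$ is $Y$-full; once this is established, the transitivity of $\varphi_Y$ follows immediately from Theorem \ref{thmintro: Y-full}. First I would record the algebra: since $\overbar{H_1(M)}_T = A \oplus K$, the projection $\overbar{H_1(M)}_T \to G = \overbar{H_1(M)}_T / K$ restricts to an isomorphism on $A$, so $G$ is free abelian with $\Z$-basis the images of $[\alpha_1], \dots, [\alpha_n]$ — which I keep denoting $[\alpha_i]$ — and every element of $G$ is a unique integer combination $\sum_{i=1}^n m_i[\alpha_i]$. Recall from the discussion before Definition \ref{def: Y-full} that a periodic orbit $\gamma$ of $\varphi$ represents $g \in G$ exactly when the image of $[\gamma] \in H_1(M)$ under $H_1(M) \to \overbar{H_1(M)}_T \to G$ equals $g$ (this uses that $G$ is abelian and torsion-free, so the map $\pi_1(M)\to G$ factors through $H_1(M)\to \overbar{H_1(M)}_T$). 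Using the hypothesis, fix for each $i$ periodic orbits $\beta_i^+$ and $\beta_i^-$ of $\varphi$ representing $[\alpha_i]$ and $-[\alpha_i]$ respectively.

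Next I would run the construction, which is the "$\delta$-dense periodic orbit $+$ Anosov closing lemma $+$ Fried's pair of pants" strategy from the proof of Theorem \ref{thmintro: Y-full}, now with arbitrarily many orbits and an arbitrary target class. Fix $v = \sum m_i[\alpha_i] \in G$. Pick $\varepsilon > 0$ small (small enough that $\varepsilon$-close loops in $M$ are freely homotopic), let $0 < \delta < \varepsilon$ be a corresponding shadowing constant, and let $\gamma_0$ be a $\delta$-dense periodic orbit of $\varphi$ (which exists by transitivity together with the closing lemma); write its class in $G$ as $\sum c_i[\alpha_i]$. I then build a periodic $\delta$-pseudo-orbit $P$ by traversing $\gamma_0$ once and, for each $i$ with $m_i \neq c_i$, inserting at a point of $\gamma_0$ lying within $\delta$ of $\beta_i^{\sigma_i}$ — where $\sigma_i = +$ if $m_i - c_i > 0$ and $\sigma_i = -$ if $m_i - c_i < 0$ — a detour that jumps onto $\beta_i^{\sigma_i}$, flows $|m_i - c_i|$ times around it, and jumps back. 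Such points of $\gamma_0$ exist by $\delta$-density, and the only discontinuities of $P$ are the size-$<\delta$ jumps of the detours, so $P$ is a legitimate periodic $\delta$-pseudo-orbit.

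Finally I would identify the homology class of the genuine periodic orbit $\eta$ that $\varepsilon$-shadows $P$. Since $\varepsilon$ is small, $\eta$ is freely homotopic to the closed loop obtained from $P$ by closing its jumps with short arcs; equivalently, applying Fried's pair-of-pants construction (\cite[\S 2]{fried1983transitive}) to $P$ piece by piece gives $[\eta] = [\gamma_0] + \sum_i |m_i - c_i|\,[\beta_i^{\sigma_i}]$ in $H_1(M)$. Pushing this forward to $G$ and using $[\beta_i^{\sigma_i}] \mapsto \sigma_i[\alpha_i]$ together with $\sigma_i|m_i - c_i| = m_i - c_i$, the image of $[\eta]$ in $G$ is $\sum c_i[\alpha_i] + \sum(m_i - c_i)[\alpha_i] = v$. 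Hence $\eta$ represents $v$; as $v \in G$ was arbitrary, $\varphi$ is $Y$-full, and Theorem \ref{thmintro: Y-full} then gives that $\varphi_Y$ is transitive, which is the assertion.

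The step I expect to be the main obstacle is the homological bookkeeping for $\eta$ in the last paragraph — proving that shadowing the long pseudo-orbit $P$ really produces a periodic orbit in the expected class of $H_1(M)$. In the proof of Theorem \ref{thmintro: Y-full} this is carried out for two orbits and target $0$; the new features here are that we attach arbitrarily many periodic loops, with multiplicities, to a single $\delta$-dense "hub" orbit $\gamma_0$, and then cancel the spurious contribution $[\gamma_0]$ by realizing $v-[\gamma_0]\in A$ as a formal sum of copies of the $\beta_i^{\pm}$ — this is exactly where the hypothesis that each basis element $[\alpha_i]$ of $G\cong A$ is represented by periodic orbits in \emph{both} signs is used. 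The remaining verifications (that $P$ is a genuine $\delta$-pseudo-orbit, and that the detours can be inserted using only $\delta$-density of $\gamma_0$) are routine.
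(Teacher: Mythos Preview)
Your argument is correct and uses the same core ingredients as the paper --- an $\varepsilon$-dense periodic orbit, the Anosov closing lemma, and Fried's pair-of-pants homology relation --- but the logical order is inverted. The paper proves transitivity of $\varphi_Y$ \emph{directly}: it takes an $\varepsilon$-dense periodic orbit $\eta_0$, writes its class in $\overbar{H_1(M)}_T$ as $\sum c_i[\alpha_i] + [\beta]$ with $[\beta]\in K$, and then cancels the coefficients $c_i$ one at a time (each step being a single two-orbit pair of pants) to produce an $\varepsilon$-dense orbit whose class lies in $K$; such an orbit lifts to an $\varepsilon$-dense family of closed orbits in $Y$, giving transitivity, and $Y$-fullness is then read off via the easy (Dougall--Sharp) direction of Theorem~\ref{thmintro: Y-full}. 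You instead establish $Y$-fullness first --- hitting an \emph{arbitrary} target class $v\in G$ rather than only $0$ --- and then invoke the harder direction of Theorem~\ref{thmintro: Y-full} to deduce transitivity. Your all-at-once pseudo-orbit with several simultaneous detours is legitimate, though the paper's iterative version has the minor advantage that each step is a literal two-orbit Fried surface; on the other hand, your route does not need to maintain $\varepsilon$-density of the modified orbit through the iterations, since $Y$-fullness imposes no density requirement on the representing orbit.
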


\begin{rem}
    This theorem tells us that as soon as there exists some nontrivial $[\alpha] \in \overbar{H_1(M)}_T$ such that both $[\alpha]$ and $-[\alpha]$ are represented by some periodic orbits of $\varphi$, then for any infinite cyclic cover $Y$ of $M$ associated to an element $[\delta] \in H^1(M; \Z)$ with $[\delta]([\alpha]) \neq 0$, the lifted flow $\flow_Y$ is transitive.
\end{rem}

\begin{proof}[Proof of Theorem \ref{theorem: construct Y-full cover}]
    Fix some small $\varepsilon > 0$, let $\eta_0$ be an $\varepsilon$-dense periodic orbit of $\varphi$. Then we can write its homology class $[\eta_0] = \sum^n_{i = 1} c_i [\alpha_i] + [\beta]$, where $[\beta] \in K$ and $c_i\in \Z$ for all $i$. If $c_i = 0$ for all $i$, then $[\eta_0] = [\beta] \in K$, so $\eta_0$ lifts to an $\varepsilon$-dense union of closed orbits in $Y$, implying that the lifted flow $\flow_Y$ is transitive. So for the rest of this proof, up to renaming, we assume that for some positive integer $k \leq n$, $c_1, \dots, c_k \neq 0$.

    Suppose that $c_1 > 0$. By assumption, there exists a periodic orbit that represents the homology class $-[\alpha_1]$. By going around that orbit $c_1$-times, we obtain a periodic orbit $\gamma_1$ that represents the homology class $-c_1[\alpha_1]$. As in the proof of Theorem \ref{thmintro: Y-full}, by applying the Anosov closing lemma and then Fried's ``pair of pants'' construction, we find an $\varepsilon$-dense periodic $\eta_1$ of $\flow$ such that $\eta_1$ is homologous to $\eta_0 + \gamma_1$. Then
    \[
        [\eta_1] = [\eta_0] + [\gamma_1] = \sum^k_{i = 1} c_i [\alpha_i] + [\beta] - c_1[\alpha_1] = \sum^k_{i = 2} c_i [\alpha_i] + [\beta].
    \]
    If $c_1 < 0$, then by assumption, there exists a periodic orbit that represents the homology class $[\alpha]$, and we obtain $\gamma_1$ by going over that orbit $-c_1$-times.

    We then repeat this process for $i = 2, \dots, k$.
    For each $i$, we obtain an $\varepsilon$-dense periodic orbit $\eta_i$ of $\flow$ such that 
    \[
    [\eta_i] = \sum^k_{i_0 = i+1} c_{i_0} [\alpha_{i_0}] + [\beta].
    \]
    In the end, we obtained an $\varepsilon$-dense periodic orbit $\eta_k$ of $\flow$ such that $[\eta_k] = [\beta] \in K$. Then $\eta_k$ lifts to an $\varepsilon$-dense union of closed orbits of $\flow_Y$, implying that $\flow_Y$ is transitive, and, by Theorem \ref{thmintro: Y-full}, that $\flow$ is $Y$-full.    
\end{proof}

\subsection{Manifolds with RFRS fundamental groups and their covers}

We now wish to show that many Anosov flows satisfy the hypothesis of Theorem \ref{theorem: construct Y-full cover}, in order to prove Theorem \ref{thmintro: ubiquitous existence}. We will in fact prove a more general result, see Theorem \ref{theorem: rfrs cover} below.

First, recall that Theorem \ref{thm: homotopical charachterization suspension} implies that, up to taking a double cover, for \emph{any} Anosov flow which is not orbit equivalent to a suspension, there exists a pair of distinct periodic orbits $\alpha$, $\alpha'$ such that $\alpha$ is freely homotopic to the inverse of $\alpha'$. This implies that $[\alpha]$ and $[\alpha']=-[\alpha]$ are both represented by periodic orbits. The only issue preventing us from directly applying Theorem \ref{theorem: construct Y-full cover} is that we may in general have $[\alpha] = 0 \in \overbar{H_1(M)}_T$.

Deep results in $3$-manifolds topology, due to work of Agol and Wise, ensures that for a lot of $3$-manifolds, one can always take finite covers to ensure that $[\alpha]$ is not a torsion element in $H_1(M)$. This property, which was introduced by Agol in \cite{Agol_2008}, is that of \emph{residually finite rationally solvable (RFRS)} groups. We start by recalling its definition:

\begin{definition}[Agol] \label{def: rfrs}
    A group $G$ is \emph{RFRS} if there is a sequence of subgroups $G = G_0 > G_1 > G_2 > \cdots$ such that $G_i$ is a normal subgroup of $G$ for all $i$, $\bigcap_i G_i = \{1_G\}$, $[G \colon G_i] < \infty$, and $G_{i+1} \leq (G_i)^{(1)}_r$, where $(G_i)^{(1)}_r = \{x \in G \mid \exists k \neq 0, x^k \in [G_i, G_i]\}$.

    We call a sequence of subgroups of $G$ satisfying the condition above an \emph{RFRS} sequence of $G$.
\end{definition}

Note that any subgroup of an RFRS group is also RFRS. The following fundamental result shows how common RFRS groups are among $3$-manifold groups.

\begin{theorem}[Agol \cite{agol_virtual_haken_conjecture},  Przytycki--Wise \cite{wise_mixed_3_manifolds}, Liu \cite{Liu_2013}]\label{thm: virtually special}
    Let $M$ be an orientable, irreducible $3$-manifold. Suppose that $M$ is \emph{not} a graph manifold without a non-positively curved metric, then $\pi_1(M)$ is virtually special.\footnote{In our context, one can define a group to be \emph{special} if it is a subgroup of a right-angled Artin group, see e.g., \cite{wise_mixed_3_manifolds} or \cite[Corollary 5.8]{3_manifold_groups_book} for more precisions.} In particular, $\pi_1(M)$ is virtually RFRS.
\end{theorem}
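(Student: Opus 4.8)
The plan is to feed the geometric (JSJ) decomposition of $M$ into the relevant deep theorems --- handling the hyperbolic, Seifert fibered, mixed, and graph-manifold cases separately --- and then to upgrade ``virtually special'' to ``virtually RFRS'' using the formal properties of right-angled Artin groups; in other words there is essentially no new argument, only an organizing observation. Since $M$ is orientable and irreducible, geometrization together with the JSJ decomposition expresses $M$ as a union of finitely many pieces glued along incompressible tori, each piece being either Seifert fibered or atoroidal; by Thurston--Perelman, each atoroidal piece carries a complete finite-volume hyperbolic metric on its interior.

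I would then argue case by case. If $M$ is hyperbolic, then $\pi_1(M)$ is virtually special: in the closed case this is Agol's theorem \cite{agol_virtual_haken_conjecture}, via the cubulation of Bergeron--Wise (built from the surface subgroups of Kahn--Markovic) together with Wise's special-quotient machinery; in the cusped case $M$ is Haken and virtual specialness follows from Wise's work directly. If $M$ is Seifert fibered, then by irreducibility $\pi_1(M)$ is infinite, and --- once the excluded non-NPC cases ($\widetilde{\mathrm{SL}_2}$, Nil, Sol) are set aside --- $M$ has $\H^2 \times \R$ or Euclidean geometry, so a finite cover of $M$ is a trivial circle bundle $\Sigma \times \mathbb{S}^1$ and $\pi_1(M)$ is virtually $\pi_1(\Sigma) \times \Z$ (or virtually $\Z^3$); since closed surface groups and free groups embed in right-angled Artin groups, and a direct product of groups that embed in RAAGs embeds in the product (join) RAAG, $\pi_1(M)$ is virtually special. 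If $M$ is mixed --- its JSJ decomposition is nontrivial and contains a hyperbolic piece --- then virtual specialness of $\pi_1(M)$ is precisely the theorem of Przytycki--Wise \cite{wise_mixed_3_manifolds}. Finally, if $M$ is a graph manifold, then by hypothesis it admits a non-positively curved metric, and Liu's theorem \cite{Liu_2013} (together with the work of Przytycki--Wise) gives that such a graph manifold has virtually special fundamental group; the graph manifolds admitting no NPC metric are exactly the ones the statement excludes.

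For the last sentence, it remains to pass from virtually special to virtually RFRS. A special group embeds in a right-angled Artin group $A_\Gamma$; Agol \cite{Agol_2008} proved that every $A_\Gamma$ is RFRS (using the canonical retractions onto sub-RAAGs together with residual torsion-free nilpotence); and an arbitrary subgroup of an RFRS group is again RFRS. Hence a finite-index special subgroup of $\pi_1(M)$ is RFRS, so $\pi_1(M)$ is virtually RFRS. The main obstacle is not in this assembly at all: the entire mathematical content lives in the cited theorems of Agol, Wise, Przytycki--Wise, and Liu, so the only genuine care needed is expository --- matching the case division to the hypotheses of each cited result, and in particular confirming that the excluded non-NPC graph manifolds (the $\widetilde{\mathrm{SL}_2}$-, Nil-, and Sol-type ones) are precisely those for which virtual specialness genuinely fails. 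In the write-up this ``proof'' therefore reduces to a short paragraph of citations organized along the JSJ hierarchy rather than an argument.
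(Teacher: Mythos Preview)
Your proposal is correct and follows essentially the same approach as the paper: split along the JSJ/geometric decomposition, invoke Agol for the hyperbolic case, Przytycki--Wise for the mixed case, Liu for the NPC graph-manifold case, and then pass from virtually special to virtually RFRS via Agol's result that right-angled Artin groups are RFRS. The paper's proof is in fact even more terse than yours --- it does not separate out the Seifert fibered case (subsuming it under graph manifolds) and simply cites \cite[Corollary 5.8]{3_manifold_groups_book} and \cite[Theorem 2.2]{Agol_2008} for the implication special $\Rightarrow$ RFRS --- so your additional detail is harmless elaboration rather than a different route.
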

\begin{proof}
    The fact that the fundamental group of hyperbolic $3$-manifolds is virtually special is due to Agol \cite{agol_virtual_haken_conjecture}. Przytycki--Wise \cite{wise_mixed_3_manifolds} proves it in the case of manifolds whose JSJ-decomposition is not trivial and contains at least one atoroidal piece. Finally, Liu \cite{Liu_2013} treats the case of graph manifolds that admit a non-positively curved metric.
    One of the consequence of being a special group is to be RFRS, see, e.g., \cite[Corollary 5.8]{3_manifold_groups_book} and \cite[Theorem 2.2]{Agol_2008}.
\end{proof}

To prove Theorem \ref{thmintro: ubiquitous existence}, we will need the following technical lemma. For a group $G$, we denote by $[G, G]$ its commutator subgroup. and we denote by $H_1(G) = H_1(G; \Z)$ the quotient group $G/ [G, G]$. 

\begin{lemma}\label{lemma: rfrs lemma}
    Let $G$ be an RFRS group. Then for any cyclic subgroup $\langle g \rangle$ (finite or infinite) of $G$, there exists a finite-index subgroup $G^\prime$ such that $g \in G^\prime$ and such that $g$ represents a nontrivial element $[g]$ in the torsion-free abelian group $\overbar{H_1(G^\prime)}_T$.
\end{lemma}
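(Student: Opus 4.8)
The plan is to work directly with a fixed RFRS sequence $G = G_0 > G_1 > G_2 > \cdots$ for $G$ (Definition \ref{def: rfrs}) and to locate $g$ at the precise level at which it drops out of the filtration. We may assume $g \neq 1$, there being nothing to prove otherwise. Since the $G_i$ are nested and $\bigcap_i G_i = \{1\} \not\ni g$, the set $\{i \mid g \in G_i\}$ is a nonempty proper initial segment $\{0, 1, \dots, n\}$ of indices; let $n$ be its maximum, so that $g \in G_n$ but $g \notin G_{n+1}$. (Implicitly this already forces $g$ to have infinite order — an RFRS group is torsion-free, since a nontrivial torsion element would lie in $\ker(G_i \to \overbar{H_1(G_i)}_T)\subseteq G_{i+1}$ for every $i$ and hence in $\bigcap_i G_i$ — so the ``finite cyclic'' case of the statement is vacuous once $g \neq 1$.)

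My claim is that $G' \coloneqq G_n$ works. By construction $g \in G'$, and $[G : G'] < \infty$ because $G_n$ sits in an RFRS sequence. It remains to see that the image $[g]$ of $g$ under the natural surjection $G_n \twoheadrightarrow \overbar{H_1(G_n)}_T = H_1(G_n)/\tor\big(H_1(G_n)\big)$ is nontrivial. The kernel of this surjection is exactly $\{x \in G_n \mid x^k \in [G_n, G_n]\ \text{for some}\ k \neq 0\}$, and the defining property of an RFRS sequence is that this kernel is contained in $G_{n+1}$. Since $g \notin G_{n+1}$, it follows that $g$ is not in this kernel, i.e.\ $[g] \neq 0$ in $\overbar{H_1(G')}_T$. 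As $\overbar{H_1(G')}_T$ is torsion-free abelian by definition, this is precisely the desired conclusion.

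The proof is thus short, and is really just careful bookkeeping with the filtration; the only point I would be most careful about is the direction of the containment used in the RFRS condition. What is needed is that each $G_{i+1}$ \emph{contains} $\ker\!\big(G_i \to \overbar{H_1(G_i)}_T\big)$ — equivalently, that $G_i/G_{i+1}$ is a finite quotient of the torsion-free abelian group $\overbar{H_1(G_i)}_T$ — so that an element escaping $G_{n+1}$ is automatically rationally homologically nontrivial in $G_n$. No further input is required: in particular the argument does not use finite generation of $G$, nor any of the $3$-manifold topology of Theorem \ref{thm: virtually special}, which enters only afterwards when this lemma is combined with Theorem \ref{theorem: construct Y-full cover} to prove Theorem \ref{thmintro: ubiquitous existence}.
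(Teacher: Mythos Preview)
Your proof is correct and is essentially the same as the paper's: both take $G' = G_n$ for the last index $n$ with $g \in G_n$, and use that $\ker\bigl(G_n \to \overbar{H_1(G_n)}_T\bigr) = (G_n)^{(1)}_r \subseteq G_{n+1}$ to conclude $[g] \neq 0$ (the paper phrases this equivalently as the statement that $G_n \to G_n/G_{n+1}$ factors through $\overbar{H_1(G_n)}_T$). Your caution about the direction of the containment is warranted --- the paper's Definition~\ref{def: rfrs} writes $G_{i+1} \leq (G_i)^{(1)}_r$, but its own proof of the lemma (and Agol's original formulation) uses the reverse inclusion $(G_i)^{(1)}_r \leq G_{i+1}$, exactly as you identify.
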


\begin{proof}
    To prove this lemma, we use the following fact.

    \begin{claim}\label{claim: rfrs factor through}
        For all $i$, the map $G_i \to G_i / G_{i+1}$ factors through the map $G_i \to \overbar{H_1(G_i)}_T$. In other words, the following is a commutative diagram
        \begin{center}
            \begin{tikzcd}
                G_i \arrow[dr] \arrow[rr]
                    & & G_i / G_{i+1}  \\
                    & \overbar{H_1(G_i)}_T \arrow[ur]
        \end{tikzcd}
        \end{center}
    \end{claim}
The proof of this claim is an exercise in group theory, it is even taken as a replacement of the condition $G_{i+1} \leq (G_i)^{(1)}_r$ in the definition of RFRS given in \cite{3_manifold_groups_book}. We provide a proof for completeness.

    \begin{proof}[Proof of Claim \ref{claim: rfrs factor through}]
        For any $i$, we have
        \begin{align*}
            \overbar{H_1(G_i)}_T &= \frac{G_i/[G_i, G_i]}{\tor(G_i/[G_i, G_i])} \\
            &= \frac{G_i/[G_i, G_i]}{\{x \in G_i \mid \exists k \neq 0, x^k \in [G_i, G_i]\}} \\
            &= \frac{G_i/[G_i, G_i]}{(G_i)^{(1)}_r} \\
            &= G_i / (G_i)^{(1)}_r.  
        \end{align*}
        The last equality follows from the fact that $[G_i, G_i] < (G_i)^{(1)}_r$. \par

        On the other hand, by the definition of RFRS groups, we have $(G_i)^{(1)}_r \lhd G_{i+1} \lhd G_i$. Then by the Third Isomorphism Theorem, we have
        \[
        G_i/G_{i+1} \cong \frac{G_i/(G_i)^{(1)}_r}{G_{i+1}/(G_i)^{(1)}_r}.
        \]
        Then we are able to build a map 
        \begin{align*}
            G_i \longrightarrow \overbar{H_1(G_i)}_T &\cong G_i / (G_i)^{(1)}_r \longrightarrow \frac{G_i/(G_i)^{(1)}_r}{G_{i+1}/(G_i)^{(1)}_r},
        \end{align*}
        which induces the commutative diagram
        \begin{center}
            \begin{tikzcd}
            g \arrow[rr, mapsto] \arrow[dr, mapsto]
                & & g(G_i)^{(1)}_r \\
                & g(G_i)^{(1)}_r\left(G_{i+1}/(G_i)^{(1)}_r\right) \arrow[ur, mapsto].
        \end{tikzcd}
        \end{center}
    \end{proof}

    Now, going back to the proof of the lemma, consider the cyclic subgroup $\langle g \rangle < G$. Let $G^\prime = G_i$, where $G_i$ is the subgroup in the RFRS sequence of $G$ such that $g \in G_i$ and $g \notin G_{i+1}$. Equivalently, we have that $\langle g \rangle < G_i$ and $\langle g \rangle \nless G_{i+1}$. Then the map $G_i \to G_i/G_{i+1}$ takes $g$ to a nontrivial element in $G_i/G_{i+1}$, and it follows from Claim \ref{claim: rfrs factor through} that $g$ represents a nontrivial element in $\overbar{H_1(G^\prime)}_T$.
\end{proof}

We are now ready to state and prove the main result of this section, from which we will deduce Theorem \ref{thmintro: ubiquitous existence}.

\begin{theorem}\label{theorem: rfrs cover}
    Let $\varphi$ be a transitive Anosov flow on $M$ whose fundamental group is virtually RFRS. Assume that $\flow$ is not a suspension Anosov flow. Then there exists a regular infinite cover $\overbar{M}$ of $M$ such that the lifted flow $\overbar{\varphi}$ on $\overbar{M}$ is transitive. 
    
    Moreover, we can choose the cover $\overbar M$
    to be such that the action of $\pi_1(\overbar{M})$ on $\cP_\varphi = \orbitspace$ is topologically transitive and does not fix any branching leaves.
\end{theorem}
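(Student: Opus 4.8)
## Proof Proposal for Theorem \ref{theorem: rfrs cover}

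\textbf{Proposal.} The plan is to realize $\overbar M$ as a carefully chosen abelian cover of a finite cover of $M$, exploiting Theorem \ref{theorem: construct Y-full cover} for transitivity and arranging that the homology classes of the branching periodic orbits become ``visible'' in the cover so that their stabilizers are killed. First a string of reductions. Passing to a finite cover leaves $\cP_\varphi$ unchanged, hence preserves ``not a suspension'' (Theorem \ref{theorem: trichotomy}); and since any nontrivial $g\in\pi_1(M)$ with a fixed point in $\cP_\varphi$ acts there with hyperbolic (so infinite-order) dynamics by \ref{anosov-like axiom hyperbolic}, a finite-index subgroup still has dense fixed-point set in $\cP_\varphi$, so by Theorem \ref{thm_characterization_transitive} the lift of $\varphi$ to a finite cover is again transitive. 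Using that $\pi_1(M)$ is virtually RFRS (Theorem \ref{thm: virtually special}) I would pass to a finite regular cover with RFRS fundamental group (normal core of a finite-index RFRS subgroup; RFRS passes to subgroups), then, since $\varphi$ is not a suspension, to a double cover producing distinct periodic orbits $\alpha,\alpha'$ with $\alpha$ freely homotopic to $(\alpha')^{-1}$, so that $\pm[\alpha]$ are both represented by periodic orbits. By \cite{fenley98_structure_of_branching} there are only finitely many non-separated leaves up to $\pi_1$, hence finitely many periodic orbits $\beta_1,\dots,\beta_m$ lying on branching leaves of $\widetilde\cF^s$ or $\widetilde\cF^u$; by the structure of branching (Proposition \ref{prop: free homotopy}, \cite{Fenley_homotopic_properties}) each such $\beta_i$ is freely homotopic to another periodic orbit, and freely homotopic periodic orbits of an Anosov flow are oppositely oriented, so (after a further double cover making the two orbits distinct) $\pm[\beta_i]$ is represented for every $i$. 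Finally, applying Lemma \ref{lemma: rfrs lemma} successively to the deck transformations of $\alpha,\beta_1,\dots,\beta_m$ inside the RFRS group and taking normal cores in $\pi_1(M)$ at each stage — using that ``represents a nontorsion class in $\overbar{H_1(-)}_T$'' only shrinks under passing to subgroups — I arrive, after relabelling, at a finite regular cover $M^\circ\to M$ on which $\alpha$ and $\beta_1,\dots,\beta_m$ are periodic orbits with nontorsion homology classes, $\pm$ of each still represented, and $\beta_1,\dots,\beta_m$ still exactly the periodic orbits on branching leaves.

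Now set $F=\pi_1(M)/\pi_1(M^\circ)$ and let $A\le\overbar{H_1(M^\circ)}_T$ be the $F$-submodule generated by $[\alpha],[\beta_1],\dots,[\beta_m]$; since the deck transformations of $M^\circ\to M$ carry periodic orbits of $\varphi_{M^\circ}$ to periodic orbits, $\pm f[\alpha]$ and $\pm f[\beta_i]$ are all represented by periodic orbits for $f\in F$, and these elements generate $A$. By Maschke's theorem (over $\Q$, $F$ finite) choose an $F$-invariant complement $V$ of $A\otimes\Q$ in $\overbar{H_1(M^\circ)}_T\otimes\Q$, and let $K:=V\cap\overbar{H_1(M^\circ)}_T$: this is an $F$-invariant subgroup with $A\cap K=0$, of infinite index since $A\neq 0$. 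Let $\overbar M$ be the cover of $M^\circ$ with $\pi_1(\overbar M)$ the preimage of $K$ under $\pi_1(M^\circ)\to\overbar{H_1(M^\circ)}_T$; $F$-invariance of $K$ makes $\pi_1(\overbar M)$ normal in $\pi_1(M)$, so $\overbar M\to M$ is a regular, infinite cover, and its deck group over $M^\circ$ is $G^\circ:=\overbar{H_1(M^\circ)}_T/K$, which contains $A$ as a finite-index subgroup; in particular $[\alpha]$ and each $[\beta_i]$ have infinite order in $G^\circ$. Three things then need checking. (i) \emph{Transitivity of $\overbar\varphi$}: run the proof of Theorem \ref{theorem: construct Y-full cover} — for $\varepsilon>0$ take an $\varepsilon$-dense periodic orbit $\eta_0$ of $\varphi_{M^\circ}$, replace it by a suitable power so its class in $G^\circ$ becomes an integral combination of the $f[\beta_i]$ (with $\alpha=\beta_0$), kill each term with the periodic orbit representing its negative via the Anosov closing lemma and Fried's pair-of-pants construction, and obtain an $\varepsilon$-dense periodic orbit $\eta$ with $[\eta]\in K$, which lifts to an $\varepsilon$-dense union of closed orbits of $\overbar\varphi$; so $\overbar\varphi$ has dense periodic orbits, hence is transitive. (ii) \emph{No branching leaf is fixed}: the $\pi_1(M)$-stabilizer of a branching leaf $l$ is infinite cyclic, generated by the deck transformation $\rho$ of the periodic orbit it carries; if $\rho^k$ generates $\langle\rho\rangle\cap\pi_1(M^\circ)$, this is the deck transformation of the corresponding periodic orbit of $\varphi_{M^\circ}$, so $[\rho^k]=\pm[\beta_i]$ has infinite order in $G^\circ=\pi_1(M^\circ)/\pi_1(\overbar M)$, whence $\langle\rho\rangle\cap\pi_1(\overbar M)=\langle\rho^k\rangle\cap\pi_1(\overbar M)=\{1\}$. (iii) \emph{Topological transitivity of $\pi_1(\overbar M)\acts\cP_\varphi$}: one cannot invoke Theorem \ref{thm_characterization_transitive} here, precisely because step (ii) makes the action fail \ref{Anosov-like axiom branching} (when $\cP_\varphi$ has branching leaves; if not, $\cP_\varphi$ is skew or trivial and the statement is immediate from Theorem \ref{thm_characterization_transitive}); instead, take a dense — necessarily non-periodic — orbit $\mathcal O$ of $\overbar\varphi$, lift a point of it to $\widetilde M$, and let $x_0\in\cP_\varphi$ be its $\widetilde\varphi$-orbit; since the orbit-space projection $\widetilde M\to\cP_\varphi$ and the covering $\widetilde M\to\overbar M$ are open and intertwine the (saturated) flows, density of $\mathcal O$ forces $\pi_1(\overbar M)\cdot x_0$ to meet every nonempty open set of $\cP_\varphi$, so the action is topologically transitive.

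The hard part — and essentially the whole point of the argument — is the first paragraph: one must make \emph{finitely many} homology classes nontorsion in a \emph{single} finite cover, which relies on the finiteness of branching orbits together with iterating Lemma \ref{lemma: rfrs lemma}, and on the stability of nontorsion-ness under passing to subgroups; one must keep ``$\pm$ represented'' alive through the whole tower (which is why the double covers and the use of the anti-coherence of freely homotopic orbits are needed); and one must ensure the final abelian cover is $\pi_1(M)$-equivariant — hence regular over $M$ rather than merely over $M^\circ$ — which is why one works with the $F$-module $A$ and uses Maschke's theorem, the resulting ``finite-index'' discrepancy between $A$ and $G^\circ$ being harmlessly absorbed by passing to a power of $\eta_0$. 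By comparison, the observation in (iii) that the induced action on $\cP_\varphi$ is no longer Anosov-like, so transitivity must be extracted from the flow directly, is a minor but essential point.
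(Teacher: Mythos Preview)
Your argument follows the same overall strategy as the paper: use Fenley's finiteness of branching periodic orbits, iterate Lemma~\ref{lemma: rfrs lemma} through an RFRS tower to force each of them (together with one auxiliary orbit in the $\R$-covered case) to have nontorsion homology in a finite cover, and then apply Theorem~\ref{theorem: construct Y-full cover} to produce the infinite abelian cover $\overbar M$. You add two refinements that the paper leaves implicit or does not address. First, to guarantee that $\overbar M$ is regular over $M$ itself (not merely over the intermediate finite cover), you take normal cores in $\pi_1(M)$ at every finite stage and then choose the complement $K$ to be $F$-invariant via Maschke's theorem; the price is that $K$ is only a rational complement of $A$, which you correctly absorb by replacing $\eta_0$ by a power before running the closing-lemma/pair-of-pants argument. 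The paper simply applies Theorem~\ref{theorem: construct Y-full cover} to the finite cover $\hat M$ and does not discuss $F$-invariance of $K$. Second, you note that once the branching leaves lose their stabilizers, the action $\pi_1(\overbar M)\acts\cP_\varphi$ fails Axiom~\ref{Anosov-like axiom branching} and hence Theorem~\ref{thm_characterization_transitive} no longer applies; your direct argument --- lifting a dense $\overbar\varphi$-orbit to $\widetilde M$ and projecting to $\cP_\varphi$ --- is the correct way to extract topological transitivity of the action, and the paper leaves this step unstated. One cosmetic imprecision: in (ii) the identity $[\rho^k]=\pm[\beta_i]$ is only literally true under your relabeling of the $\beta_i$ as the branching orbits \emph{in $M^\circ$}; read with the original labeling it should be $\pm f\cdot[\beta_i]$ for some $f\in F$, which is in $A$ regardless, so nothing is lost.
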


The second part of the above result is particularly interesting in the context of Anosov-like actions, as it shows that Axiom \ref{Anosov-like axiom branching} of Definition \ref{def: anosov-like} may indeed fail for the action induced by Anosov flows on non-compact manifolds, even when all the other axioms are satisfied. We will further build upon this in Section \ref{section: non-compact suspension like}.

\begin{rem}
The cover $\overbar{M}$ we build is in general not an abelian cover of $M$, but is an abelian cover of a finite cover of $M$. Note also that the conclusion of this theorem holds for the case of suspension Anosov flows, but the argument for the proof is completely different. In fact, a far stronger conclusion holds for both suspension and skew Anosov flows, see Theorem \ref{theorem: R-covered transitivity}.
\end{rem}

\begin{proof}
    We will build the desired cover $\overbar{M}$ explicitly. We will first find a finite cover $\hat{M}$ of $M$ such that all periodic orbits of $\varphi$ on branching leaves --- if there are any --- represent non-torsion elements in $H_1(\hat{M})$. Then we will construct $\overbar{M}$ by applying Theorem \ref{theorem: construct Y-full cover}. \par

First, up to taking a cover of degree at most $4$, we can assume that $M$ is orientable and $\flow$ is transversely orientable (i.e., both its stable and unstable foliations are transversely orientable). Second, up to taking a further cover if necessary, we may assume that $\pi_1(M)$ is RFRS.

    By Theorem F of \cite{fenley98_structure_of_branching}, we know that $\varphi$ has finitely many periodic orbits on branching leaves. These periodic orbits thus determine finitely many distinct (unoriented) free homotopy classes $g_1, \dots, g_n\in \pi_1(M)$. This list is empty if and only if the flow $\flow$ is $\R$-covered, which, by Theorem \ref{theorem: trichotomy} and our assumption that $\flow$ is not a suspension, implies that $\flow$ must be skew. In that case, we take $g_1$ to be any element of $\pi_1(M)$ that is represented by a periodic orbit of $\flow$ instead.

    With this choice, we find that for each $i$, $g_i$ leaves invariant a lozenge in $\cP_\flow$. Thus, both $g_i$ and $g_i^{-1}$ are represented by periodic orbits of $\flow$.

    Our first step will consist of showing that one can take a regular finite cover $\hat M$ of $M$ such that the lifts of (powers of) all the $g_i$ represent non-torsion elements in $H_1(\hat M)$.

    Consider the subcollection $\{g_{n_1}, \dots, g_{n_k}\}$ of $\{g_1, \dots, g_n\}$ such that $[g_{n_i}] = 0 \in \overbar{H_1(M)}_T$. Up to renaming the $g_i$'s, we assume that $g_1, \dots, g_k$ with $k \leq n$ are torsion elements in $H_1(M)$. Fix an RFRS sequence for $\pi_1(M)$. By Lemma \ref{lemma: rfrs lemma}, we know that for each $g_i$, there exists a finite-index normal subgroup $G_i$ of $\pi_1(M)$ in the fixed RFRS sequence such that $[g_i]$ is nontrivial in $\overbar{H_1(G_i)}_T$. Up to renaming $\{g_1, g_2, \dots, g_k\}$ again, we assume that $G_1$ is the largest subgroup among $\{G_i\}_{1 \leq i \leq k}$, i.e., $G_1$ is the first subgroup appearing in the fixed RFRS sequence of $\pi_1(M)$, so that $g_1, \dots, g_n \in G_1$. Let $M_1$ be the finite cover of $M$ corresponding to $G_1$, i.e., $G_1 = \pi_1(M_1) < \pi_1(M)$  (recall that by definition of RFRS, $G_1\triangleleft \pi_1(M)$ so $M_1$ is a regular cover). Then $[g_1]$ is nontrivial in $\overbar{H_1(M_1)}_T$. Note that if $k < n$, i.e., there exists some $g_i$ such that $[g_i] \neq 0 \in \overbar{H_1(M)}_T$, then since $M_1$ is a finite cover of $M$, up to replacing $g_i$ by some finite power of it, we have $g_i \in \pi_1(M_1)$ and $[g_i] \neq 0 \in \overbar{H_1(M)}_T$. \par

    We also note that it is possible to have other $[g_i]$ to be nontrivial in $\overbar{H_1(M)}_T$. This does not pose any issue, as we are able to produce a strictly smaller collection of potentially trivial classes of $\overbar{H_1(M)}_T$ represented by periodic orbits of $\flow$. \par
        
    Now, since subgroups of an RFRS group are also RFRS, we can repeat the process for $\pi_1(M_1)$. More precisely, $g_1, \dots, g_n$ are all elements of $\pi_1(M_1)$, and at least $g_1$ is such that $[g_1]\neq 0 \in \overbar{H_1(M)}_T$. Now consider the subcollection $\{g_{n_1},\dots, g_{n_l}\}$ of $\{g_2, \dots, g_n\}$ such that $[g_{n_j}] = 0\in \overbar{H_1(M)}_T$. Up to renaming the $g_i$, we may assume as above that $g_2, \dots, g_l$, with $l\leq n$ are torsion elements in $H_1(M_1)$. As above, we fix an RFRS sequence for $M_1$ to get a subgroup $G_2$ (which is finite index in $G_1$) such that $g_2, \dots, g_l\in G_2$, and, up to further renaming, $[g_2]$ is nontrivial in $\overbar{H_1(G_2)}_T$. Then we consider $M_2$ the finite regular cover of $M_1$ such that $\pi_1(M_2)=G_2$. By construction, in $M_2$, $[g_2]$ represents a non-torsion element of $H_1(M_2)$. Moreover, since $M_2$ is a finite cover of $M_1$ and $[g_1]\neq 0 \in \overbar{H_1(M_1)}_T$, we deduce that, up to replacing $g_1$ by some finite power of it so that $g_1\in \pi_1(M_2)$, we have $[g_1]\neq 0 \in \overbar{H_1(M_2)}_T$.

    This inductive process terminates in at most $n$ steps, and thus we obtained a finite regular cover $\hat M$ of $M$, such that appropriate powers of $g_1,\dots, g_n$ all represent non-torsion elements in $H_1(\hat M)$.

    Now we have a finite collection of nontrivial elements $[g_1], [g_2], \dots, [g_n] \in \overbar{H_1(\hat{M})}_T$. By construction each $[g_i]$ and $[g_i^{-1}]=-[g_i]$ are represented by periodic orbits of the lifted flow $\hat\flow$ on $\hat M$. Moreover, since $\hat{M}$ is a finite cover of $M$, $\hat{\flow}$ is transitive. Then by applying Theorem \ref{theorem: construct Y-full cover}, we obtain an abelian cover $\overbar{M}$ of $\hat M$ such that the flow $\overbar{\flow}$ obtained by lifting $\hat \flow$ is transitive.

    To prove the last statement of the theorem, notice that, by construction, $\pi_1(\overbar{M})$ does not contain any element conjugated to a power of one of the elements $g_i$. Now, the elements $g_i$ were chosen so that any branching leaf in the orbit space of $\flow$ (which is the same as the orbit space of $\bar\flow$) was fixed by a conjugate of (any power of) one of the $g_i$. Hence, no element in $\pi_1(\overbar{M})$ fixes any branching leaf.
\end{proof}

As mentioned in Theorem \ref{thm: virtually special}, most orientable irreducible $3$-manifolds have virtually RFRS fundamental groups. Since any $3$-manifold admitting an Anosov flow  is irreducible, we immediately deduce the following, which in particular gives Theorem \ref{thmintro: ubiquitous existence} from the introduction:

\begin{corollary}\label{corollary: rfrs manifolds}
    Let $M$ be a compact 3-manifold that admits a transitive Anosov flow $\varphi$. If $M$ is either a hyperbolic manifold, or a mixed manifold, or a non-positively curved graph manifold, then there exists an infinite regular cover $\overbar{M}$ of $M$ on which the lifted flow $\overbar{\varphi}$ is transitive.
\end{corollary}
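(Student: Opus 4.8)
The plan is to deduce Corollary~\ref{corollary: rfrs manifolds} by combining Theorem~\ref{thm: virtually special}, Theorem~\ref{theorem: rfrs cover}, and Theorem~\ref{theorem: R-covered transitivity}, after first recalling that any $3$-manifold carrying an Anosov flow is irreducible and (up to a finite cover) orientable. First I would note that the three listed classes of manifolds --- hyperbolic, mixed (i.e., with nontrivial JSJ decomposition containing at least one atoroidal piece), and non-positively curved graph manifolds --- are precisely the orientable irreducible $3$-manifolds that are \emph{not} graph manifolds without a non-positively curved metric, so Theorem~\ref{thm: virtually special} applies and gives that $\pi_1(M)$ is virtually RFRS.

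Next I would split into two cases according to whether $\flow$ is a suspension Anosov flow. If $\flow$ is not a suspension, then all the hypotheses of Theorem~\ref{theorem: rfrs cover} are met: $\flow$ is a transitive Anosov flow on a manifold with virtually RFRS fundamental group that is not a suspension, so that theorem directly produces a regular infinite cover $\overbar M$ of $M$ on which the lifted flow $\overbar\flow$ is transitive, which is exactly the conclusion we want. If instead $\flow$ is orbit equivalent to a suspension of an Anosov diffeomorphism of the torus, then $\flow$ is in particular $\R$-covered, so Theorem~\ref{theorem: R-covered transitivity} (or the relevant part of Theorem~\ref{thmintro: R-covered}) applies and gives a regular infinite cover on which the lifted flow remains transitive --- for instance the cover $\T^2\times\R$ or $\mathbb{S}^1\times\R\times\R$ appearing in the trichotomy, where transitivity persists. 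In either case we obtain the desired infinite regular cover.

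The only genuinely new step here --- and the main point to get right --- is the verification that the three named classes of manifolds are exactly the orientable irreducible $3$-manifolds excluded from ``graph manifold without a non-positively curved metric'' in Theorem~\ref{thm: virtually special}. This is a matter of unwinding the JSJ trichotomy for irreducible $3$-manifolds (atoroidal/hyperbolic; nontrivial JSJ with an atoroidal piece, i.e.\ mixed; pure graph manifold, which by Liu \cite{Liu_2013} is covered precisely when it carries a non-positively curved metric), together with the standard fact that manifolds admitting Anosov flows are irreducible and can be made orientable after passing to a cover of degree at most two --- a reduction that is harmless since a finite cover of an infinite regular cover is still an infinite regular cover, and transitivity of a lift to a finite cover is equivalent to transitivity upstairs. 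So the ``hard part'' is really just careful bookkeeping of the topological classification rather than any new dynamics.
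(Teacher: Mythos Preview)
Your approach matches the paper's: irreducibility of $M$ plus membership in one of the three listed classes feeds into Theorem~\ref{thm: virtually special} to get virtually RFRS, and then Theorem~\ref{theorem: rfrs cover} gives the cover. The paper's proof is just the one sentence preceding the corollary.

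However, your treatment of the suspension case is both unnecessary and incorrect. It is unnecessary because a manifold carrying a suspension Anosov flow is a Sol manifold, which is a graph manifold that does \emph{not} admit a non-positively curved metric (its fundamental group is solvable but not virtually abelian, which obstructs non-positive curvature); hence it is excluded by the hypotheses of the corollary and the case never arises. It is incorrect because you misread Theorem~\ref{theorem: R-covered transitivity}: the covers $\T^2\times\R$ and $\mathbb{S}^1\times\R\times\R$ that you cite as examples ``where transitivity persists'' are exactly the covers listed there as \emph{non}-transitive. In fact, for a suspension flow no infinite regular cover has a transitive lift: any infinite-index normal subgroup of $\Z^2\rtimes_A\Z$ with $A$ hyperbolic is contained in the $\Z^2$ factor (if it contained an element with nonzero $\Z$-coordinate it would be forced to have finite index), so the lifted flow has no periodic orbit and Theorem~\ref{theorem: R-covered transitivity} says it is not transitive.

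So the fix is simply to observe that the hypotheses exclude suspensions, drop that case entirely, and apply Theorem~\ref{theorem: rfrs cover} directly.
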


\begin{rem}
    It is very likely that for any transitive Anosov flow on a graph manifold $M$ that is not a suspension, one can build a finite cover such that the condition of Theorem \ref{theorem: construct Y-full cover} is satisfied for at least one non-torsion element in $H_1(M)$. For instance, if $M$ supports a transitive Anosov flow and there exists an essential torus $\T$ in $M$ such that the map $H_1(\T)\to H_1(M)$ is injective, then one can use Theorem \ref{theorem: construct Y-full cover} to build an infinite cyclic cover $Y$ on which the lifted flow will be transitive. Indeed, for any essential torus $\T$ in a manifold supporting an Anosov flow that is not a suspension, there always exists an element $[\alpha]\in H_1(\T)$ such that both $[\alpha]$ and $-[\alpha]$ are represented by periodic orbits of the flow; see, e.g., \cite[Theorem 6.10]{BF13}.
\end{rem}

\section{$\R$-covered flows and regular covers.} \label{section: regular covers}

Recall that an Anosov flow is said to be \emph{$\R$-covered} if the leaf spaces $\Lambda(\cF^s)$ and $\Lambda(\cF^u)$ are homeomorphic to $\R$.

The point of this section is to prove Theorem \ref{thmintro: R-covered} that we restate:

\begin{theorem}\label{theorem: R-covered transitivity}
    Let $\varphi \colon M \to M$ be a $\R$-covered Anosov flow. Let $\overbar{M}$ be a regular cover of $M$ and $\overbar{\varphi}$ be the lift of $\varphi$ to $\overbar{M}$. Then $\overbar{\varphi}$ is transitive if and only if $\overbar{\varphi}$ admits a periodic orbit, and if and only if it has a dense set of periodic orbits.
    
    Moreover, if $\overbar{\varphi}$ is not transitive, then we have exactly one of the three following cases:
    \begin{enumerate}
        \item $\overbar{M} = \widetilde{M}$, i.e., it is the universal cover;
        \item $\overbar{M}$ is a fiberwise cover of $T^1\widetilde{\Sigma}$, where $\widetilde{\Sigma} \cong \mathbb{H}^2$ is the universal cover of some hyperbolic orbifold $\Sigma$, and $\varphi$ is a lift of the geodesic flow on $T^1\Sigma$;
        \item $\overbar{M} = \mathbb{T}^2 \times \R$ or $\overbar{M} = \mathbb{S}^1 \times \R \times \R$, and $\varphi$ is the suspension flow of an Anosov diffeomorphism on a torus.
    \end{enumerate}
\end{theorem}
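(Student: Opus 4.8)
The plan is to read everything off the orbit space. The bifoliated plane $\cP_\varphi = \orbitspace$ is built from $\widetilde{M}$ alone, so it is simultaneously the orbit space of $\varphi$ and of any lift $\overbar{\varphi}$; writing $\overbar{M} = \widetilde{M}/H$ with $H \triangleleft \pi_1(M)$, the subgroup $H$ (canonically identified with $\pi_1(\overbar{M})$) acts on $\cP_\varphi$. Since every $\R$-covered Anosov flow is transitive, Theorem~\ref{thm_characterization_transitive} applies to $\pi_1(M) \acts \cP_\varphi$: the fixed leaves are dense and every $\pi_1(M)$-orbit of a leaf is dense. The first task is the dictionary between the flow $\overbar{\varphi}$ and the action $H \acts \cP_\varphi$; in the non-compact setting, where Smale's spectral decomposition is unavailable, this is part of the content, and it is here that the work of \cite{BBM24} on (possibly non-transitive) Anosov-like actions enters. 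Granting it, $\overbar{\varphi}$ has a periodic orbit iff some nontrivial element of $H$ fixes a leaf of $\cP_\varphi$; $\overbar{\varphi}$ has a dense set of periodic orbits iff the $H$-fixed leaves are dense; and $\overbar{\varphi}$ is transitive iff $H \acts \cP_\varphi$ is topologically transitive.

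For the chain of equivalences I would exploit the normality of $H$. Suppose some nontrivial $h \in H$ fixes a leaf, hence by axiom~\ref{anosov-like axiom hyperbolic} a unique point $x \in \cP_\varphi$. For every $g \in \pi_1(M)$ the conjugate $ghg^{-1}$ lies in $H$ and fixes $g \cdot x$; combining this with the density statements for the ambient transitive action and the structure results of \cite{BBM24} one upgrades it to: the $H$-fixed points, hence leaves, are dense in $\cP_\varphi$. Since an $\R$-covered $\cP_\varphi$ has no singular points and no branching leaves, axioms~\ref{anosov-like axiom singularity} and~\ref{Anosov-like axiom branching} are vacuous and axiom~\ref{anosov-like axiom hyperbolic} is inherited, so $H \acts \cP_\varphi$ is itself an Anosov-like action, and by Theorem~\ref{thm_characterization_transitive} it is topologically transitive. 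Translating back, $\overbar{\varphi}$ has a dense set of periodic orbits and is transitive. This proves ``a periodic orbit $\Rightarrow$ dense periodic orbits $\Rightarrow$ transitive''; the missing implication ``transitive $\Rightarrow$ a periodic orbit'' is the contrapositive of the classification below.

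Now suppose $\overbar{\varphi}$ has no periodic orbit. Then no nontrivial element of $H$ fixes a leaf, so (axiom~\ref{anosov-like axiom hyperbolic}) none has a fixed point, i.e.\ $H$ acts \emph{freely} on the leaf spaces $\Lambda(\widetilde{\cF}^s) \cong \Lambda(\widetilde{\cF}^u) \cong \R$. I then split via the trichotomy of Theorem~\ref{theorem: trichotomy}, which for an $\R$-covered flow leaves only the trivial and skew cases. If $\cP_\varphi$ is trivial, then $\varphi$ is orbit equivalent to a suspension (Solodov; see \cite{BM_book}), $\pi_1(M) \cong \Z^2 \rtimes_A \Z$ with $A$ hyperbolic, and any element projecting nontrivially to the $\Z$-factor acts on $\R^2$ by an affine map with linear part $A^n$, with $A^n - I$ invertible, hence with a fixed point and so a fixed leaf; therefore $H \le \Z^2$, and the only sublattices normal in $\pi_1(M)$ (equivalently $A$-invariant) are $\{0\}$ and finite-index ones, giving $\overbar{M} = \widetilde{M}$ (case (1)) or the manifolds of case (3). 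If $\cP_\varphi$ is skew, I would use the rigidity of skew $\R$-covered flows due to Barbot \cite{barbot_1995} and Fenley \cite{fenley1994_anosov_flow} --- the $\pi_1(M)$-equivariant ``one step up'' homeomorphism of $\cP_\varphi$, and the fact that every periodic orbit lies in a bi-infinite chain of lozenges --- together with \cite{BBM24}, to show that a nontrivial free $H$ must be infinite cyclic and normal; then $M$ is Seifert-fibered, and by the classification of Anosov flows on Seifert manifolds (Ghys, Barbot) $\varphi$ is a lift of the geodesic flow of a hyperbolic orbifold $\Sigma$, with $\widetilde{M} \cong T^1\widetilde{\Sigma}$ and $\overbar{M}$ a fiberwise cover of $T^1\widetilde{\Sigma}$ (case (2)); if $H = \{1\}$ we land in case (1). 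A direct inspection of the three exceptional actions on $\cP_\varphi$ --- translations parallel to irrational directions on the trivial plane, powers of the one-step-up map on the skew strip --- shows each is not topologically transitive, which closes the loop.

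The main obstacle is the skew case: controlling, and in particular excluding ``large'', normal subgroups that act on the skew plane with no fixed leaf. This rests entirely on the rigidity of the skew picture --- the equivariant one-step-up map, the lozenge chains through every periodic orbit, and the analysis of elements acting without fixed points --- which is precisely the structural input supplied by Barbot--Fenley and by \cite{BBM24}. A second, genuinely non-trivial point present throughout is the non-compactness of $\overbar{M}$: one cannot invoke Smale's decomposition to relate transitivity of $\overbar{\varphi}$ to density of its periodic orbits or to transitivity of $H \acts \cP_\varphi$, and this dictionary must be set up through the orbit-space methods of \cite{BBM24}.
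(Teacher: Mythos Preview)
Your overall strategy matches the paper's: pass to the action of $H=\pi_1(\overbar M)$ on $\cP_\varphi$, use normality and conjugation to propagate a single fixed point, verify the $H$-action is Anosov-like on an $\R$-covered plane, and deduce transitivity. There is, however, a real imprecision at the key step. Conjugation shows that each $ghg^{-1}\in H$ fixes $g\cdot x$, but the set $\pi_1(M)\cdot x$ is \emph{not} dense in $\cP_\varphi$ --- it projects to a single closed orbit in $M$ --- so you have not established that $H$-fixed \emph{points} are dense. What conjugation does give, via item~\ref{item_charac_dense_leaves} of Theorem~\ref{thm_characterization_transitive} applied to the ambient $\pi_1(M)$-action, is that $H$-fixed \emph{leaves} are dense, i.e.\ axiom~\ref{anosov-like axiom dense fixed leaves} holds; this is exactly Lemma~\ref{lemma: regular cover + periodic orbit is almost Anosov-like}. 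That makes $H\acts\cP_\varphi$ Anosov-like, but Theorem~\ref{thm_characterization_transitive} is only a characterization of transitivity among Anosov-like actions, not a criterion implying it. The paper supplies the missing step via Proposition~\ref{prop_sufficient_transitive}: the trivial and skew planes admit no chain of lozenges satisfying conditions~\ref{smale_chain_share_side} and~\ref{Smale_chain_no_shared_corner} of Definition~\ref{def: smale chain}, so by \cite{BBM24} every Anosov-like action on them is automatically transitive. Your phrase ``structure results of \cite{BBM24}'' may be gesturing at exactly this, but invoking Theorem~\ref{thm_characterization_transitive} in its place is a misattribution of where the content lies.

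For the classification when $\overbar\varphi$ has no periodic orbit, you split first by trivial versus skew and analyze each case by hand. The paper takes a shorter uniform route: a free $H$-action on the leaf space $\Lambda(\widetilde\cF^s)\cong\R$ forces $H$ to be abelian by H\"older's theorem, and then the existence of a nontrivial abelian normal subgroup of $\pi_1(M)$ forces $\varphi$ to be a suspension or a finite lift of a geodesic flow, by results of Plante and Ghys. Only after that does the paper split into cases. Your approach works, but H\"older's theorem is the clean shortcut you are missing.
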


Note that we are now considering all regular covers and not just abelian ones.

In \cite{BBM24}, the authors introduced the notion of a \emph{Smale class} of an Anosov-like action on a bifoliated plane, which generalizes the notion of basic sets, in the sense of \cite{smale1967differentiable}, of Anosov flows. In particular, \cite[Theorem 1.8]{BBM24} gives a characterization of topologically transitive Anosov-like actions as those that have a unique Smale class (see also \cite[Theorem 2.9.2]{BM_book}). Moreover, as in the compact Anosov flow case, where basic sets are separated by transverse tori (see \cite{Bru}), distinct Smale classes are ``separated'' by some special chains of lozenges called \emph{Smale chains}:

\begin{definition}[Barthelmé--Bonatti--Mann, \cite{BBM24}]\label{def: smale chain}
    Let $G \acts \cP$ be an Anosov-like action. A \emph{Smale chain} for $G \acts \cP$ is a bi-infinite chain of lozenges $\mathcal{W} = \{L_i\}_{i \in \Z}$ such that for all $i$, 
    \begin{enumerate}[label = (\roman*)]
        \item $L_i$ and $L_{i+1}$ share a side. \label{smale_chain_share_side}
        \item no three consecutive lozenges $L_i$, $L_{i+1}$ and $L_{i+2}$ share a common corner. \label{Smale_chain_no_shared_corner} 
        \item\label{smale_chain_wandering} each lozenge $L_i$ is wandering, that is, no points of the interior of $L_i$ are fixed by any nontrivial element of $G$.
    \end{enumerate}
\end{definition}

\begin{figure}[h]
    \centering
    \includegraphics[width=0.5\linewidth]{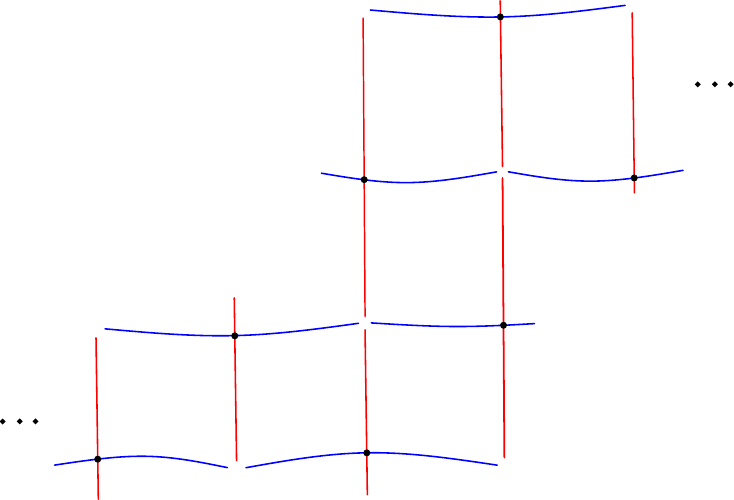}
    \caption{A local leaf configuration of a Smale chain}
    \label{fig: smale chain}
\end{figure}

Theorem 1.19 of \cite{BBM24} gives that if an Anosov-like action is not topologically transitive, then there exists at least one such Smale chain. Equivalently, one can characterize the transitivity of Anosov-like actions as those that do \emph{not} admit any Smale chain. Note that, while condition \ref{smale_chain_wandering} in Definition \ref{def: smale chain} is dynamical in nature, conditions \ref{smale_chain_share_side} and \ref{Smale_chain_no_shared_corner} only involve the configuration of foliations in the plane, and not the group action. Thus, we can deduce the following sufficient condition for \emph{all} Anosov-like actions to be transitive on certain bifoliated planes:
\begin{proposition}\label{prop_sufficient_transitive}
Let $\cP$ be a bifoliated plane. Assume that $\cP$ does not contain any chains of lozenges $\{L_i\}_{i\in \Z}$ such that for all $i$, 
     \begin{enumerate}[label = (\roman*)]
        \item $L_i$ and $L_{i+1}$ share a side. \label{item_chain_share_side}
        \item no three consecutive lozenges $L_i$, $L_{i+1}$ and $L_{i+2}$ share a common corner. \label{item_chain_no_shared_corner}
    \end{enumerate}
Then any Anosov-like action of a group $G$ on $\cP$ is transitive.
\end{proposition}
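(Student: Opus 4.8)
The plan is to argue by contraposition, using the characterization of non-transitive Anosov-like actions via Smale chains from \cite{BBM24}. Suppose that some group $G$ acts Anosov-like on $\cP$ and that this action fails to be topologically transitive. Then I would invoke \cite[Theorem 1.19]{BBM24}, which guarantees the existence of a Smale chain $\mathcal{W} = \{L_i\}_{i \in \Z}$ for the action $G \acts \cP$, in the sense of Definition \ref{def: smale chain}.

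The key observation is that a Smale chain satisfies three conditions, of which the first two — consecutive lozenges share a side, and no three consecutive lozenges share a common corner — are precisely conditions \ref{item_chain_share_side} and \ref{item_chain_no_shared_corner} in the statement of the proposition, and these two conditions are intrinsic to the bifoliated plane $\cP$: they make no reference to the group $G$. The third condition in Definition \ref{def: smale chain}, that each lozenge be wandering, is the only dynamical ingredient, and it can simply be discarded. Thus the Smale chain $\mathcal{W}$ is, in particular, a bi-infinite chain of lozenges in $\cP$ satisfying \ref{item_chain_share_side} and \ref{item_chain_no_shared_corner}, which contradicts the hypothesis on $\cP$.

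Reversing the implication gives the proposition: if $\cP$ contains no chain of lozenges satisfying \ref{item_chain_share_side} and \ref{item_chain_no_shared_corner}, then no Anosov-like action on $\cP$ can admit a Smale chain, hence by \cite[Theorem 1.19]{BBM24} every such action is topologically transitive. The only point requiring care is to cite the statement from \cite{BBM24} in the correct direction and to make explicit that conditions \ref{item_chain_share_side} and \ref{item_chain_no_shared_corner} are purely configuration-theoretic; there is no genuine obstacle here, as the entire content of the proposition is the decoupling of the combinatorial conditions defining a Smale chain from its dynamical wandering condition.
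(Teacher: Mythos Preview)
Your proposal is correct and matches the paper's own argument essentially verbatim: the paper presents Proposition~\ref{prop_sufficient_transitive} as an immediate consequence of \cite[Theorem~1.19]{BBM24}, observing exactly as you do that conditions \ref{smale_chain_share_side} and \ref{Smale_chain_no_shared_corner} of a Smale chain are purely configuration-theoretic while only \ref{smale_chain_wandering} is dynamical. There is nothing to add.
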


In particular, the skew and trivial bifoliated planes satisfy the above condition, and so any Anosov-like actions on such planes are automatically transitive. Hence, the proof of the first part of Theorem \ref{theorem: R-covered transitivity}, as well as its generalization in Theorem \ref{thm: transitive cover no infinite chains} below, consists in showing that the fundamental group of the regular cover $\overbar{M}$ still act Anosov-like on the orbit space. Our next lemma proves that this is almost true for any Anosov flow, as soon as the lifted flow admits at least one periodic orbit.

\begin{lemma}\label{lemma: regular cover + periodic orbit is almost Anosov-like}
    Let $\varphi \colon M \to M$ be a transitive Anosov flow on a compact 3-manifold $M$. Let $\overbar{M}$ be a regular cover of $M$ and $\overbar{\varphi}$ be the lift of $\varphi$ to $\overbar{M}$. If $\overbar{\varphi}$ has a periodic orbit, then the action $\pi_1(\overbar{M}) \acts \cP_\varphi$ satisfies Axioms \ref{anosov-like axiom hyperbolic}, \ref{anosov-like axiom dense fixed leaves}, and \ref{anosov-like axiom singularity} of an Anosov-like action.
\end{lemma}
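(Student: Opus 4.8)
The plan is to check the three axioms one at a time, exploiting the fact that Axioms \ref{anosov-like axiom hyperbolic}, \ref{anosov-like axiom dense fixed leaves} and \ref{anosov-like axiom singularity} are all ``local'' statements about the orbit space $\cP_\varphi$ together with the $\pi_1(M)$-action, and that $\pi_1(\overbar M)$ sits inside $\pi_1(M)$ acting on the \emph{same} plane $\cP_\varphi$. Axioms \ref{anosov-like axiom hyperbolic} and \ref{anosov-like axiom singularity} are essentially inherited for free: since $\pi_1(M) \acts \cP_\varphi$ is Anosov-like (by \cite[Proposition 2.2]{BBM24}, as recalled at the start of Section \ref{section: orbit space}), any nontrivial $g \in \pi_1(\overbar M) \leq \pi_1(M)$ fixing a leaf already has the contracting/expanding fixed-point behaviour required by \ref{anosov-like axiom hyperbolic}, and this property only depends on $g$ and its action, not on the ambient group. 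For \ref{anosov-like axiom singularity}, note that since $\varphi$ is an Anosov (not pseudo-Anosov) flow, $\cF^s,\cF^u$ are genuinely nonsingular foliations of $\cP_\varphi$, so the axiom is vacuous; I would simply remark this (or, if one wants to keep the pseudo-Anosov generality in mind, observe that by Theorem F of \cite{fenley98_structure_of_branching} singular orbits are periodic, hence their lifts to $\overbar M$ are periodic provided $\overbar\varphi$ has a periodic orbit, and then proceed as in the branching-leaf argument below — but in the stated Anosov setting this is unnecessary).

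The real content is Axiom \ref{anosov-like axiom dense fixed leaves}: density in $\cP_\varphi$ of the set of $\cF^s$-leaves (resp.\ $\cF^u$-leaves) fixed by a nontrivial element of $\pi_1(\overbar M)$. Here I would use the hypothesis that $\overbar\varphi$ has a periodic orbit together with transitivity of $\varphi$ on the \emph{compact} base. Let $\overbar\gamma$ be a periodic orbit of $\overbar\varphi$; it projects to a periodic orbit $\gamma$ of $\varphi$, and there is an element $g \in \pi_1(\overbar M)$ representing $\gamma$ in the sense of the notation fixed just before Definition \ref{def: Y-full} — concretely, a lift $\widetilde\gamma$ in $\cP_\varphi$ is a point fixed by $g$, so the leaves $\cF^s(\widetilde\gamma)$ and $\cF^u(\widetilde\gamma)$ are fixed leaves for $\pi_1(\overbar M)$. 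Now I would propagate this one fixed point around the whole plane using the $\pi_1(M)$-action: since $\varphi$ is transitive, the periodic orbits of $\varphi$ are dense, so the $\pi_1(M)$-orbit of $\widetilde\gamma$ is dense in $\cP_\varphi$ (this is Theorem \ref{thm_characterization_transitive}\ref{item_charac_dense_fixed_points}--type density, or directly the density of lifts of periodic orbits). For any $h \in \pi_1(M)$, the point $h \cdot \widetilde\gamma$ is fixed by the conjugate $h g h^{-1}$, and $h g h^{-1} \in \pi_1(\overbar M)$ precisely because $\pi_1(\overbar M) \trianglelefteq \pi_1(M)$ (the cover $\overbar M$ is regular). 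Hence the set of points of $\cP_\varphi$ fixed by nontrivial elements of $\pi_1(\overbar M)$ contains the dense set $\pi_1(M)\cdot\widetilde\gamma$, and therefore the sets of $\cF^s$- and $\cF^u$-leaves through such points are dense in $\cP_\varphi$. This gives \ref{anosov-like axiom dense fixed leaves}.

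The step I expect to require the most care is the normality/conjugation bookkeeping in the density argument: one must be sure that $hgh^{-1}$ really is nontrivial (clear, since $g \neq 1$ and conjugation is a bijection) and really lies in $\pi_1(\overbar M)$ (this is exactly where regularity of the cover is used, and it is why the lemma's statement restricts to regular covers), and that ``density of $\pi_1(M)\cdot\widetilde\gamma$ in $\cP_\varphi$'' is correctly invoked — this follows from transitivity of $\varphi$ via Theorem \ref{thm_characterization_transitive}, applied to the Anosov-like action $\pi_1(M) \acts \cP_\varphi$, which gives that $\pi_1(M)\cdot l$ is dense for every leaf $l$, and in particular the fixed leaves $\cF^s(\widetilde\gamma), \cF^u(\widetilde\gamma)$ have dense $\pi_1(M)$-orbit, so the points $h\cdot\widetilde\gamma = \cF^s(h\widetilde\gamma)\cap\cF^u(h\widetilde\gamma)$ are dense. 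Everything else (the verification of \ref{anosov-like axiom hyperbolic}, and the observation dispatching \ref{anosov-like axiom singularity}) is immediate from the corresponding facts for $\pi_1(M) \acts \cP_\varphi$. I would not claim Axiom \ref{Anosov-like axiom branching}: indeed Theorem \ref{theorem: rfrs cover} shows it genuinely can fail for $\pi_1(\overbar M)$, which is precisely why the lemma says ``almost Anosov-like''.
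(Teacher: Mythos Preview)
Your proposal is correct and follows essentially the same approach as the paper: Axiom \ref{anosov-like axiom hyperbolic} is inherited from the ambient action, Axiom \ref{anosov-like axiom singularity} is vacuous for Anosov flows, and Axiom \ref{anosov-like axiom dense fixed leaves} is obtained by taking one fixed point coming from the assumed periodic orbit, conjugating its stabilizer by arbitrary elements of $\pi_1(M)$ (using normality of $\pi_1(\overbar M)$), and invoking Theorem \ref{thm_characterization_transitive}\ref{item_charac_dense_leaves} to conclude density of the resulting fixed leaves. The paper's proof is the same argument, stated more tersely.
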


\begin{proof}
    Since $\pi_1(M) \acts \cP_\varphi$ is Anosov-like, $\pi_1(\overbar{M})$ as a subgroup of $\pi_1(M)$, inherits \ref{anosov-like axiom hyperbolic}. Axiom \ref{anosov-like axiom singularity} is vacuously satisfied since $\varphi$ is an Anosov flow and does not have any singular orbit. Finally, we check \ref{anosov-like axiom dense fixed leaves}: By assumption $\overbar{\varphi}$ has a periodic orbit, which is equivalent to having a nontrivial element $h \in \pi_1(\overbar{M})$ such that $h \cdot x = x$ for some $x \in \cP_\varphi$. By item \ref{item_charac_dense_leaves} of Theorem \ref{thm_characterization_transitive}, $\pi_1(M)\cdot \cF^s(x)$ and $\pi_1(M)\cdot \cF^s(x)$ are both dense in $\cP_\flow$. Since $\overbar{M}$ is a regular cover, $\pi_1(\overbar{M})$ is a normal subgroup of $\pi_1(M)$. In particular, for any $g\in \pi_1(M)$, the element $ghg^{-1}$ is in $\pi_1(\overbar{M})$ and fixes the point $g \cdot x$. Thus, each leaf of $\pi_1(M)\cdot \cF^s(x)$ (resp.~$\pi_1(M)\cdot \cF^s(x)$) are fixed by an element of $\pi_1(\overbar{M})$, so Axiom \ref{anosov-like axiom dense fixed leaves} is satisfied.
\end{proof}

Now we prove Theorem \ref{theorem: R-covered transitivity}.

\begin{proof}[Proof of Theorem \ref{theorem: R-covered transitivity}]
    If $\overbar{M}$ is a finite cover then $\overbar{\varphi}$ is clearly transitive. So for the rest of the proof we will assume that $\overbar{M}$ is an infinite cover. \par
    
    We first show that $\overbar{\varphi}$ must be transitive when it has a periodic orbit. By Lemma \ref{lemma: regular cover + periodic orbit is almost Anosov-like}, $\pi_1(\overbar{M}) \acts \cP_\varphi$ satisfies \ref{anosov-like axiom hyperbolic}, \ref{anosov-like axiom dense fixed leaves}, and \ref{anosov-like axiom singularity}. Since $\varphi$ is $\R$-covered, $P_\varphi$ does not have any nonseparated leaves, so \ref{Anosov-like axiom branching} is trivially satisfied. Therefore, $\pi_1(\overbar{M}) \acts P_\varphi$ is an Anosov-like action. Since $\varphi$ is $\R$-covered, $\cP_\varphi$ is trivial or skew, and Proposition \ref{prop_sufficient_transitive} implies that $\pi_1(\overbar{M})$ is transitive, so $\overbar{\varphi}$ is transitive. \par
    
    Conversely, if $\overbar{\varphi}$ is transitive, then $\overbar{\varphi}$ has a dense set of periodic orbits in $\overbar{M}$. \par

    To prove the second part of the statement, one first observes that if $\overbar{M} = \widetilde{M}$ then $\overbar{\varphi}$ does not have any periodic orbit, so $\overbar{\varphi}$ is not transitive. Now, suppose that $\overbar{M} \neq \widetilde{M}$ and $\overbar{\varphi}$ is not transitive. Then, by the above, $\overbar{\varphi}$ does not have any periodic orbit, i.e., $\pi_1(\overbar{M})$ acts freely on $\cP_\varphi$. In particular, $\pi_1(\overbar{M})$ also acts freely on the leaf spaces $\Lambda(\cF^s)$ and $\Lambda(\cF^u)$, each of which are homeomorphic to $\R$. By the classical H\"older's Theorem, we deduce that $\pi_1(\overbar{M})$ is abelian, and hence an abelian normal subgroup of $\pi_1(M)$. By results of Plante \cite{Plante81_transverse_foliation} and Ghys \cite{ghys_1984} (see, e.g., \cite[Theorem 2.11.7]{BM_book} for a proof), $\varphi$ is either a lift of the geodesic flow on the unit tangent bundle of some negatively curved orbifold $\Sigma$ or the suspension flow of an Anosov diffeomorphism. We discuss these further in cases.

    \begin{case}
        Suppose that $\varphi$ is a lift of the geodesic flow on $T^1\Sigma$, so the orbit space of $\varphi$ is skew. Let $h \colon (x, y) \mapsto (x+1, y+1)$ be the \emph{one-step-up map} on the skew plane.
        Then by Theorem 2.11.6 in \cite{BM_book}, $\pi_1(\overbar{M})$ is contained in the cyclic center $\langle h^k \rangle$ of $\pi_1(M)$. The cover associated to $\langle h \rangle$ is $T^1\widetilde{\Sigma}$, and so $\overbar{M}$ is obtained as a (self)-cover $T^1\widetilde{\Sigma}$ by unwrapping the fibers.
    \end{case}

    \begin{case}
        Suppose that $\varphi$ is the suspension flow of an Anosov diffeomorphism $f$ on a torus $\mathbb{T}^2$. Then $\pi_1(M) = \Z^2 \ltimes \Z$. Since $\widetilde{M} \cong \R^3$, $\overbar{M}$ is homeomorphic to $\R^3$ quotiented out by an abelian normal subgroup of $\pi_1(M)$, i.e., a subgroup of the $\Z^2$ factor in $\pi_1(M) = \Z^2 \ltimes \Z$. Since the $\Z^2$ factor acts by integral translations on the lifts of $\T^2$, we have that either $\overbar{M} = \mathbb{T}^2 \times \R \cong \R^3 / \Z^2$ or $\overbar{M} = \mathbb{S}^1 \times \R \times \R \cong \R^3 / \Z$.\qedhere
    \end{case}
\end{proof}

In order to apply Proposition \ref{prop_sufficient_transitive} and the other tools of \cite{BBM24}, to deduce the transitivity on regular covers given by Theorem \ref{theorem: R-covered transitivity}, it was essential to prove that the action of the subgroup of $\pi_1(\overbar{M})$ acted Anosov-like. By Lemma \ref{lemma: regular cover + periodic orbit is almost Anosov-like}, the only additional fact that was needed was Axiom \ref{Anosov-like axiom branching}, i.e., that branching leaves where fixed. While this is vacuously true for $\R$-covered flows, it is very far from true in general, as we have seen in Theorem \ref{theorem: rfrs cover}. Thus, to have a generalization of Theorem \ref{theorem: R-covered transitivity} to the non-$\R$-covered setting, one needs to further assume that Axiom \ref{Anosov-like axiom branching} is satisfied:

\begin{theorem}\label{thm: transitive cover no infinite chains}
    Let $\varphi \colon M \to M$ be a transitive Anosov flow. Let $\overbar{M}$ be an infinite regular cover of $M$ and let $\overbar{\varphi}$ be the lift of $\varphi$ to $\overbar{M}$. Suppose that all branching leaves in $\overbar{M}$ are periodic, and the orbit space $\cP_{\varphi}$ does not contain any infinite chain of lozenges that satisfies conditions \ref{smale_chain_share_side} and \ref{Smale_chain_no_shared_corner} in Definition \ref{def: smale chain}. Then $\overbar{\varphi}$ is transitive.
\end{theorem}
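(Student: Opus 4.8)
The plan is to reduce the statement to Proposition~\ref{prop_sufficient_transitive}, exactly as in the proof of Theorem~\ref{theorem: R-covered transitivity}: I will show that the induced action $\pi_1(\overbar{M}) \acts \cP_\varphi$ is an Anosov-like action, and then conclude with Proposition~\ref{prop_sufficient_transitive}, whose hypothesis is precisely that $\cP_\varphi$ contains no bi-infinite chain of lozenges $\{L_i\}_{i\in\Z}$ satisfying conditions~\ref{smale_chain_share_side} and \ref{Smale_chain_no_shared_corner} of Definition~\ref{def: smale chain} --- which is exactly what we assume here. Two standing facts will be used freely: that the universal cover of $\overbar{M}$ is again $\widetilde{M}$, so that $\cP_\varphi$ is also the orbit space of $\overbar{\varphi}$ and $\pi_1(\overbar{M})$ acts on it as a normal subgroup of $\pi_1(M)$; and that $\overbar{\varphi}$ is topologically transitive if and only if the action $\pi_1(\overbar{M}) \acts \cP_\varphi$ is topologically transitive.

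First I would reduce to the case where $\varphi$ is not $\R$-covered, the $\R$-covered case being the content of Theorem~\ref{theorem: R-covered transitivity} (there the branching hypothesis is vacuous, and the statement is understood with $\overbar{\varphi}$ having a periodic orbit, the degenerate case $\overbar{M} = \widetilde{M}$ excluded). When $\varphi$ is not $\R$-covered, the foliations $\widetilde{\cF}^s, \widetilde{\cF}^u$ on $\cP_\varphi$ are non-singular --- since $\varphi$ is an Anosov, not merely pseudo-Anosov, flow --- so by Theorem~\ref{theorem: trichotomy} both leaf spaces of $\cP_\varphi$ are non-Hausdorff; in particular $\cP_\varphi$ has at least one branching leaf. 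By hypothesis every branching leaf of $\cP_\varphi$ carries a periodic orbit of $\overbar{\varphi}$, so $\overbar{\varphi}$ has a periodic orbit, and Lemma~\ref{lemma: regular cover + periodic orbit is almost Anosov-like} gives that $\pi_1(\overbar{M}) \acts \cP_\varphi$ satisfies Axioms~\ref{anosov-like axiom hyperbolic}, \ref{anosov-like axiom dense fixed leaves} and \ref{anosov-like axiom singularity}. For Axiom~\ref{Anosov-like axiom branching}: a branching leaf $l$ of $\widetilde{\cF}^s$ or $\widetilde{\cF}^u$ contains, by hypothesis, a point $x$ fixed by some nontrivial $g \in \pi_1(\overbar{M})$; since $g$ fixes $x$ it fixes both $\widetilde{\cF}^s(x)$ and $\widetilde{\cF}^u(x)$, one of which is $l$, hence $g$ fixes $l$. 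Thus $\pi_1(\overbar{M}) \acts \cP_\varphi$ is Anosov-like, Proposition~\ref{prop_sufficient_transitive} applies thanks to the no-chains hypothesis, and the action --- hence $\overbar{\varphi}$ --- is topologically transitive.

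The proof is therefore a short assembly of the earlier results. The one genuinely non-formal point, and the step I would treat with care, is the verification of Axiom~\ref{Anosov-like axiom branching}: passing from ``the branching leaf carries a periodic orbit of $\overbar{\varphi}$'' to ``the branching leaf is fixed by a nontrivial element of $\pi_1(\overbar{M})$''. This rests on the elementary but essential observation that an element of $\pi_1(M)$ fixing a point of $\cP_\varphi$ fixes the two transverse leaves through it (the same phenomenon underlying the uniqueness of the fixed point in Axiom~\ref{anosov-like axiom hyperbolic}), together with keeping the identification of the orbit space of $\overbar{\varphi}$ with $\cP_\varphi$ straight. The only other thing to monitor is the degenerate case $\overbar{M} = \widetilde{M}$, where $\pi_1(\overbar{M})$ is trivial and $\overbar{\varphi}$ has no periodic orbit, which should be excluded from the scope of the statement.
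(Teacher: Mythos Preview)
Your proof is correct and follows exactly the paper's approach: verify that $\pi_1(\overbar{M})$ acts Anosov-like on $\cP_\varphi$ via Lemma~\ref{lemma: regular cover + periodic orbit is almost Anosov-like} and the branching hypothesis, then apply Proposition~\ref{prop_sufficient_transitive}. You even supply a detail the paper leaves implicit---that in the non-$\R$-covered case the branching hypothesis furnishes the periodic orbit required by Lemma~\ref{lemma: regular cover + periodic orbit is almost Anosov-like}---and you are right to flag the $\R$-covered edge cases, where the hypotheses become vacuous.
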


\begin{proof}
    The $\R$-covered suspension case was shown in Theorem \ref{theorem: R-covered transitivity}, while the $\R$-covered skewed case does not satisfy the assumption on lozenges, so we assume that $\varphi$ is not $\R$-covered. \par

 Thanks to Lemma \ref{lemma: regular cover + periodic orbit is almost Anosov-like}, $\pi_1(\overbar{M}) \acts \cP_\varphi$ satisfies Axioms \ref{anosov-like axiom hyperbolic}, \ref{anosov-like axiom dense fixed leaves}, and \ref{anosov-like axiom singularity}. By assumption, it also satisfies Axiom \ref{Anosov-like axiom branching}. Thus $\pi_1(\overbar{M})$ acts Anosov-like on $\cP_\flow$. So Proposition \ref{prop_sufficient_transitive} yields the conclusion.
\end{proof}


\section{Non-compact Anosov flows and homotopical characterization of suspensions}\label{section: non-compact suspension like}

In this section, we prove Theorem \ref{thmintro: noncompact_suspension}, that is, we build an example of a transitive Anosov flow on a non-compact manifold such that every periodic orbit is unique in its free homotopy class.

The strategy for building such examples is easy: Start with a transitive Anosov flow $\flow$. Suppose that $\flow$ is such that it has only finitely many orbits that are not alone in their free homotopy class. If these elements are non-torsion in homology, then we can apply Theorem \ref{theorem: construct Y-full cover} to each of them. If some of these elements are torsion, then, as in the proof of Theorem \ref{theorem: rfrs cover}, and provided that the fundamental group of the manifold is virtually special, we may first take a finite cover to ensure that all these elements are not torsion in homology. Then, the abelian cover obtained by ``unwrapping'' each of these periodic orbits gives a lifted Anosov flow that is transitive and for which any orbit that corresponds to a corner of a lozenge is not periodic.
So, the work left to do is to construct examples of Anosov flows satisfying the property that only finitely many periodic orbits are not alone in their free homotopy class, and either prove by hand that these elements are non-torsion in homology, or make sure that the fundamental group of the manifold is virtually special.

Anosov flows with the property that only finitely many periodic orbits are not alone in their free homotopy class are not uncommon: For instance, all \emph{totally periodic} Anosov flows, in the sense of \cite{BF15}, have that property. For us however, these are not the best examples, as the ambient manifold for such flows is a graph manifold, and so it is harder to justify which of these examples will have the non-torsion homological condition that we need. So instead, we will build an example on a manifold whose JSJ decomposition consists of just one atoroidal piece, using the DA bifurcation construction of Smale \cite{smale1967differentiable} and Williams \cite{williamsDA}. 

We refer to \cite{BBY}, or \cite[Section 1.2.4]{BM_book}, for the details on the DA construction. Here we just apply it to build our example:

\begin{construction}\label{construction: gluing suspension}
     Let $\varphi \colon M \to M$ be a suspension Anosov flow. Let $\alpha_1, \alpha_2$ be two distinct periodic orbits of $\varphi$. Do an attracting DA bifurcation on $\alpha_1$ and a repelling one on $\alpha_2$.
Let $V_1$, resp.~$V_2$, be a tubular neighborhood of $\alpha_1$, resp.~$\alpha_2$, with boundary transverse to the DA flow. By \cite{BBY}, we can use an orientation-preserving diffeomorphism $f \colon \partial V_1 \to \partial V_2$ to glue along the boundary tori, obtaining a new (compact) manifold $M_f = M \slash f$, and an Anosov flow $\varphi_f$ on $M_f$. Since suspension flows are transitive, by \cite[Lemma 7.6]{BBY}, $\varphi_f$ is also transitive. 
\end{construction}

We show

\begin{theorem}\label{theorem: non-compact suspension type}
    For the flow $\varphi_f \colon M_f \to M_f$ built in Construction \ref{construction: gluing suspension}, there exists a non-compact cover $\overbar{M_f}$ of $M_f$ such that the lifted flow $\overbar{\varphi_f} \colon \overbar{M_f} \to \overbar{M_f}$ is transitive and every periodic orbit of $\overbar{\varphi_f}$ is alone in its free homotopy class.
\end{theorem}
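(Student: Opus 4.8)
The plan is to analyze the periodic orbits of $\varphi_f$ on $M_f$, show that only finitely many of them fail to be alone in their free homotopy class, verify that the corresponding homology classes can be made non-torsion in a suitable finite cover, and then apply the ``unwrapping'' strategy via Theorem \ref{theorem: construct Y-full cover} (after first passing to a finite cover, as in the proof of Theorem \ref{theorem: rfrs cover}).

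First I would identify the free homotopy classes of periodic orbits of $\varphi_f$ that are not singletons. Since $\varphi$ is a suspension Anosov flow, by Theorem \ref{thm: homotopical charachterization suspension} every periodic orbit of $\varphi$ is alone in its free homotopy class, equivalently the orbit space of $\varphi$ is the trivial plane and contains no lozenges. The DA bifurcation on $\alpha_1$ and $\alpha_2$ and the gluing along the boundary tori introduces new periodic orbits only near the gluing region; the Fried–Goodman-type analysis in \cite{BBY} (and the discussion of the orbit space under such surgeries) shows that the only periodic orbits lying on branching leaves of $\cP_{\varphi_f}$ are those coming from the cores $\alpha_1,\alpha_2$ and the finitely many orbits created in the surgery, and more importantly, by Theorem F of \cite{fenley98_structure_of_branching}, there are only finitely many periodic orbits on branching leaves. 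By Proposition \ref{prop: free homotopy}, any periodic orbit that is freely homotopic (as an unoriented curve) to another periodic orbit, or to a nontrivial multiple of itself, must be a corner of a lozenge, hence must lie on a branching leaf. So only finitely many periodic orbits of $\varphi_f$ fail to be alone in their free homotopy class; let $g_1,\dots,g_n \in \pi_1(M_f)$ be the corresponding free homotopy classes, and as in the proof of Theorem \ref{theorem: rfrs cover}, both $g_i$ and $g_i^{-1}$ are represented by periodic orbits since each $g_i$ leaves a lozenge invariant.

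Next I would arrange that these classes become non-torsion in homology after a finite cover. The manifold $M_f$ is built by gluing a suspension piece (a torus bundle over $S^1$, which is atoroidal-free) to itself along tori, so its JSJ decomposition has a single atoroidal piece; by Theorem \ref{thm: virtually special} (the Przytycki–Wise case), $\pi_1(M_f)$ is virtually special, hence virtually RFRS. Then, exactly as in the proof of Theorem \ref{theorem: rfrs cover}, I apply Lemma \ref{lemma: rfrs lemma} repeatedly to produce a finite regular cover $\hat M$ of $M_f$ on which appropriate powers of all the $g_i$ represent non-torsion elements of $H_1(\hat M)$; the lifted flow $\hat\varphi$ is still transitive since $\hat M \to M_f$ is finite. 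Finally, applying Theorem \ref{theorem: construct Y-full cover} with $[\alpha_1],\dots,[\alpha_n]$ a maximal linearly independent subcollection of these non-torsion classes (each with $\pm[\alpha_i]$ represented by periodic orbits) yields an infinite abelian cover $\overbar{M_f}$ of $\hat M$ on which the lifted flow $\overbar{\varphi_f}$ is transitive. By construction, $\pi_1(\overbar{M_f})$ contains no conjugate of any power of any $g_i$; since every branching leaf of $\cP_{\varphi_f}=\cP_{\overbar{\varphi_f}}$ is fixed precisely by a conjugate of a power of some $g_i$, no element of $\pi_1(\overbar{M_f})$ fixes a branching leaf, so $\overbar{\varphi_f}$ has no periodic orbit that is a corner of a lozenge. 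By Proposition \ref{prop: free homotopy} again, every periodic orbit of $\overbar{\varphi_f}$ is then alone in its free homotopy class.

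The main obstacle I anticipate is the first step: controlling the free homotopy classes of periodic orbits of $\varphi_f$ and confirming that the only orbits which are \emph{not} alone in their class are those on branching leaves. One must be careful that the DA surgery and the torus gluing do not create unexpected freely homotopic pairs away from the surgery region — this requires understanding how the orbit space of $\varphi_f$ is assembled from that of the DA flows on the two pieces (which are Birkhoff-section–carrying pieces of a suspension with blown-up orbits), and invoking Fenley's finiteness theorem \cite{fenley98_structure_of_branching} to bound the number of such orbits. Once that structural input is in place, the rest is a direct combination of the machinery already developed: Lemma \ref{lemma: rfrs lemma}, Theorem \ref{theorem: construct Y-full cover}, and Proposition \ref{prop: free homotopy}, assembled exactly as in the proof of Theorem \ref{theorem: rfrs cover}.
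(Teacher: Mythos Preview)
Your overall strategy matches the paper's: identify the finitely many periodic orbits of $\varphi_f$ that are not alone in their free homotopy class, use virtual RFRS (via Theorem~\ref{thm: virtually special}, since the JSJ of $M_f$ is a single atoroidal piece) to pass to a finite cover where these become non-torsion in homology, and then apply Theorem~\ref{theorem: construct Y-full cover} exactly as in the proof of Theorem~\ref{theorem: rfrs cover}. That part is fine.

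The genuine gap is the sentence ``must be a corner of a lozenge, \emph{hence must lie on a branching leaf}.'' The implication \emph{corner of a lozenge $\Rightarrow$ on a branching leaf} is false in general --- the skew plane is full of lozenges and has no branching leaves whatsoever --- and Fenley's Theorem~F only bounds the number of branching leaves, not the number of lozenge corners. So your argument, as written, does not establish that only finitely many periodic orbits of $\varphi_f$ fail to be alone in their class; you correctly flag this as the main obstacle, but the tools you propose (Fenley finiteness plus orbit-space assembly) do not close it.

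The paper closes this gap with a direct geometric argument specific to Construction~\ref{construction: gluing suspension}, exploiting the transverse torus $\T=\partial V_1\sqcup_f\partial V_2$. The four closed leaves of the induced foliation on $\T$ give exactly four periodic orbits $\alpha_1,\alpha_2,\beta_1,\beta_2$, all freely homotopic to a single $g\in\pi_1(\T)$, forming the corners of one chain of lozenges $\mathcal{C}$. One then shows these are the \emph{only} orbits not alone in their class by a two-case argument on a hypothetical extra orbit $\gamma$: if $\gamma$ crosses $\T$, its projection to $\cP_{\varphi_f}$ lies in the interior of a lozenge of $\mathcal{C}$, and the non-corner criterion (\cite[Lemma~2.29]{BFM2022orbit}) forbids it from being a lozenge corner; if $\gamma$ avoids $\T$, then either a free homotopy to its partner stays in $M_f\smallsetminus\T$ --- impossible since the flow there is a DA of a suspension, which has no nontrivial free homotopies --- or the homotopy crosses $\T$, forcing $\gamma$ to be freely homotopic into $\pi_1(\T)$ and hence to be one of the four. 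This is the missing ingredient your proposal needs.
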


The construction above can be easily extended to build many different examples satisfying the conclusion of the theorem. Note that, given the main result of Fenley in \cite{fenley2022nonrcoveredanosovflows}, it seems plausible that many (maybe all?) non-$\R$-covered Anosov flows on hyperbolic $3$-manifolds have the property that only finitely many periodic orbits are not alone in their free homotopy class.

\begin{proof}[Proof of Theorem \ref{theorem: non-compact suspension type}]
    By construction, $M_f$ has a transverse torus $\T = \partial V_1 \sqcup_f \partial V_2$. The stable foliation and the unstable foliation, when restricted to $T$, have two Reeb components with two closed leaves; see Figure \ref{fig: transverse torus} (a).
    On the orbit space level, projections of lifts of closed leaves from the same foliation are nonseparated in their leaf spaces, and projections of lifts of adjacent close leaves from different foliations make perfect fits. The closed leaves on the torus $\T$ come from the intersection of $\T$ with cylindrical stable and unstable leaves that we denote by $l_1^s, l_2^s$ and $l_1^u, l_2^u$ respectively. Lifting to the universal cover, there exists $g \in \pi_1(\T) < \pi_1(M_f)$ that fixes these leaves $\widetilde{l}^s_1, \widetilde{l}^s_2, \widetilde{l}^u_1, \widetilde{l}^u_2$, so $g$ fixes points $\widetilde{\alpha}_1, \widetilde{\alpha}_2, \widetilde{\beta}_1, \widetilde{\beta}_2$ on each of these leaves in $\cP_{\varphi_f}$. By Proposition \ref{prop: free homotopy}, these fixed points are corners of a $g$-invariant chain of lozenges, so they correspond to periodic orbits $\alpha_1, \alpha_2, \beta_1, \beta_2$ of $\varphi_f$ that are freely homotopic to each other (and to $g$ as unoriented curves). \par

    We claim that  
    \begin{claim}\label{claim: no other free homotopy}
        $\alpha_1, \alpha_2, \beta_1, \beta_2$ are the only periodic orbits of $\varphi_f$ that are not unique in their free homotopy classes. 
    \end{claim}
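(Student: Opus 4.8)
The plan is to exploit the structure theory for Anosov flows obtained by gluing along a transverse torus, specifically the classification of free homotopy classes via chains of lozenges in the orbit space. The core fact to invoke is Fenley's theorem (Theorem F of \cite{fenley98_structure_of_branching}, already cited in the excerpt) that $\varphi_f$ has only finitely many periodic orbits lying on branching leaves, together with Proposition \ref{prop: free homotopy}: two periodic orbits $\gamma$, $\gamma'$ are freely homotopic (as unoriented curves, via some $g \in \pi_1(M_f)$) if and only if their lifts $\widetilde\gamma$, $\widetilde\gamma'$ are corners of a $g$-invariant chain of lozenges in $\cP_{\varphi_f}$. So the claim reduces to showing that the \emph{only} chain of lozenges in $\cP_{\varphi_f}$ that is invariant under a nontrivial element of $\pi_1(M_f)$ is the one already produced, whose corners project to $\alpha_1, \alpha_2, \beta_1, \beta_2$.

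First I would analyze where lozenges can come from. Any $g$-invariant lozenge requires $g$ to fix two points in $\cP_{\varphi_f}$, hence to fix two stable leaves and two unstable leaves; by Axiom \ref{anosov-like axiom hyperbolic} these are the stable/unstable leaves through the corners. The sides of a lozenge are rays making perfect fits, and perfect fits in the orbit space of an Anosov flow on a compact $3$-manifold are tightly controlled: by the work of Barbot--Fenley, a flow with a perfect fit that is not $\R$-covered has the perfect fit ``coming from'' an essential torus or, more precisely, the non-separated leaves and perfect fits are organized by the JSJ pieces. In Construction \ref{construction: gluing suspension}, the ambient manifold $M_f$ has JSJ decomposition consisting of two pieces: the atoroidal piece coming from the suspension (which, after the DA bifurcation and removing the tubular neighborhoods $V_1, V_2$, is a manifold whose interior supports a flow with no perfect fits among periodic leaves), glued to itself along the single transverse torus $\T$. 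Thus the only branching leaves of $\cF^s$ and $\cF^u$ are the lifts of the four cylindrical leaves $l_1^s, l_2^s, l_1^u, l_2^u$ meeting $\T$. I would argue that inside the suspension piece there are no lozenges at all — a suspension flow has trivial orbit space, no perfect fits — so every lozenge in $\cP_{\varphi_f}$ must have at least one side that is (a ray of) a lift of one of the four cylindrical leaves $l_i^{s/u}$. Combined with the fact that these lifts are permuted by $\pi_1(M_f)$ and that the $g$ fixing them is (up to conjugacy and powers) the element $g \in \pi_1(\T)$ identified above, this forces any nontrivial invariant chain of lozenges to have all its corners among the $\pi_1(M_f)$-orbit of $\{\widetilde\alpha_1, \widetilde\alpha_2, \widetilde\beta_1, \widetilde\beta_2\}$, i.e. every periodic orbit not alone in its class is $\alpha_1, \alpha_2, \beta_1$ or $\beta_2$.

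The main obstacle I expect is making rigorous the assertion that ``the suspension piece contributes no lozenges and no perfect fits beyond those created at $\T$.'' This requires understanding precisely how the orbit space of $\varphi_f$ decomposes relative to the transverse torus $\T$, and arguing that a perfect fit whose two rays both lie strictly inside the (lifted) suspension piece cannot exist — because on the suspension side the flow is, away from the surgery locus, orbit-equivalent to a genuine suspension whose restriction to the relevant region has no recurrence forcing perfect fits. I would handle this either by appealing directly to the Barbot--Fenley classification of perfect fits via essential tori (e.g. \cite{BF13}, already cited), which says every perfect fit ``sees'' a transverse torus, of which there is essentially only one here, or by a direct argument: a lozenge side is a half-leaf that is non-separated or limits on non-separated behaviour, and all such behaviour in $\cP_{\varphi_f}$ is created by the Reeb annuli on $\T$ (Figure \ref{fig: transverse torus}(a)). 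The bookkeeping — tracking which of the four leaves $\widetilde l_i^{s/u}$ a given lozenge side is, and checking that the chain closes up exactly to the configuration already drawn — is routine once this structural input is in place, so I would state the structural input as a lemma (or cite it) and then finish the claim in a short paragraph.
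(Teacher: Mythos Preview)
Your approach is genuinely different from the paper's, and while it is morally correct, it is both harder and less complete as written. The paper does not attempt to classify the lozenges or perfect fits in $\cP_{\varphi_f}$ at all. Instead it argues directly in the manifold via a case split on a hypothetical orbit $\gamma$ not alone in its class: either $\gamma$ crosses the transverse torus $\T$, or it does not. If $\gamma\cap\T\neq\varnothing$, then any lift $\widetilde\gamma$ lands in the interior of one of the lozenges of the chain $\mathcal{C}$ associated to $\T$, and the \emph{non-corner criterion} (\cite[Lemma~2.29]{BFM2022orbit}) says a point interior to a lozenge cannot itself be a corner of a lozenge --- contradicting Proposition~\ref{prop: free homotopy}. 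If $\gamma\subset M_f\setminus\T$, one looks at a free homotopy from $\gamma$ to some $\gamma'$ (also disjoint from $\T$ by the first case): either the homotopy can be taken inside $M_f\setminus\T$, giving a nontrivial free homotopy between orbits of the DA-of-suspension, impossible since the suspension has trivial orbit space; or the homotopy must cross $\T$, forcing $\gamma$ to be freely homotopic into $\pi_1(\T)$, hence into $\{\alpha_i,\beta_i\}$. This is short and uses only the non-corner criterion plus elementary $3$-manifold topology.

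Your route --- showing that every lozenge in $\cP_{\varphi_f}$ has a side on a lift of one of the $l_i^{s/u}$ --- would also prove the claim, but the ``main obstacle'' you flag is real and your proposed fixes do not close it. There is no clean statement in \cite{BF13} saying ``every perfect fit sees a transverse torus'' that you can cite off the shelf; what is true there concerns periodic orbits freely homotopic into tori, which is closer to the paper's Case~2 argument than to a classification of perfect fits. Your alternative, arguing directly that non-separation only arises from the Reeb annuli on $\T$, is essentially equivalent to the claim you are trying to prove and needs its own argument --- in effect you would end up reproducing the paper's Case~2 reasoning inside the orbit space rather than the manifold. Also, a small correction: the JSJ decomposition of $M_f$ has \emph{one} atoroidal piece (with two boundary tori glued to each other), not two.
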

    \begin{proof}[Proof of Claim \ref{claim: no other free homotopy}]
        By Proposition \ref{prop: free homotopy}, there exists a chain of lozenge $\mathcal{C}$ containing $\alpha_1, \alpha_2, \beta_1, \beta_2$ as corners. Now, suppose that there exists a periodic orbit $\gamma$ of $\varphi_f$ such that its free homotopy class $[\gamma] \neq [g]$ and $\gamma$ is not unique in $[\gamma]$. We have the following cases.

\begin{figure}[h]
    \subfigure[]{\includegraphics[width = 0.37\textwidth]{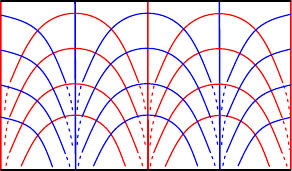}}
    \hspace{2cm}
    \subfigure[]{\includegraphics[width = 0.3\textwidth]{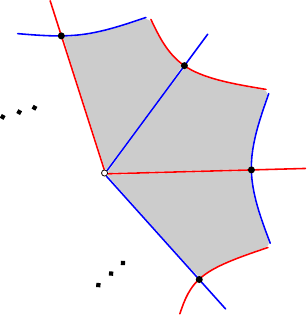}}
    \caption{Induced foliations on the transverse torus (a); and the projections in the orbit space (b). } 
    \label{fig: transverse torus}
\end{figure}

    \setcounter{case}{0}

    \begin{case}
        Suppose that $\gamma$ intersects $\T$ transversely. Then by Proposition \ref{prop: free homotopy}, the projection of $\widetilde{\gamma}$ (the lift of $\gamma$ to $\widetilde{M_f}$) to $\cP_{\varphi_f}$ is a corner of some lozenge. Since $\gamma$ crosses $\T$, this corner has to lie in the interior of a lozenge in $\mathcal{C}$. But this is impossible by the non-corner criterion (see \cite[Lemma 2.29]{BFM2022orbit} or \cite[Lemma 2.4.7]{BM_book}), which implies that any point in the interior of a lozenge of $\mathcal C$ cannot be a corner; see shaded region in Figure \ref{fig: transverse torus} (b).
    \end{case}

    \begin{case}
        Suppose that $\gamma$ does not cross $\T$, i.e., $\gamma \subset M_f\setminus \T$. 
        Let $\gamma'$ be another (possibly the same) orbit in the free homotopy class of $\gamma$. By the first part $\gamma'$ is also in $M_f\setminus \T$. 
        Consider a free homotopy between $\gamma$ and $\gamma'$. Either there exists one that stays in $M_f\setminus \T$ or all of them must cross the torus $\T$. In the first case, we would then get a nontrivial free homotopy between orbits of the DA of the suspension flow on $M$, which is absurd. In the second case, we would have that $\gamma$ is freely homotopic to an element in $\pi_1(\T)$, but by construction, the only periodic orbits of $\flow_f$ that are freely homotopic into $\T$ are $\alpha_1,\alpha_2,\beta_1$ and $\beta_2$, which proves the claim.\qedhere 
    \end{case}
\end{proof}

    By construction, the JSJ decomposition of $M_f$ is one atoroidal piece with two boundary tori. By Theorem \ref{thm: virtually special}, $\pi_1(M_f)$ is virtually RFRS, then by Theorem \ref{theorem: rfrs cover}, we know that there exists an infinite cover $\overbar{M_f}$ of $M_f$ on which the lifted flow $\overbar{\varphi_f}$ is transitive and that the action $\pi_1(\overbar{M_f}) \acts \PP_{\varphi_f}$ does not preserve any branching leaves. By Claim \ref{claim: no other free homotopy}, the only orbits that were not alone in their free homotopy classs in $M_f$ were on branching leaves, thus every periodic orbit of $\overbar{\flow}_f$ is alone in its free homotopy class.
\end{proof} 

In order to decide whether an example of the type built in Theorem \ref{theorem: non-compact suspension type} can admit a surface of section, a first step could be to check whether or not Fried's \emph{homological} characterization of suspensions (see \cite{Fried82}) is satisfied. In our context, it amounts to answering the following question:
\begin{question*}
    Let $\overbar{\flow}_f\colon \overbar M_f \to \overbar M_f$ be as in Theorem \ref{theorem: non-compact suspension type}. Does $\overbar{\flow}_f$ admit any homologically trivial periodic orbits?
\end{question*}

\bibliographystyle{alpha}
\bibliography{reference}

\end{document}